\documentclass[12pt]{amsart}
\usepackage{graphicx}
\usepackage{amsgen,amsmath,amsxtra,amssymb,amsfonts,amsthm,color,amssymb}
\usepackage{tikz} 
\usetikzlibrary{arrows}
\numberwithin{equation}{section}
\newtheorem{example}{\sc Example}
\newtheorem*{dhef}{Definition}
\newtheorem{theo}{Theorem}

\newtheorem{remark}[example]{\sc Remark}
\newtheorem{prop}[example]{\sc Proposition}

\newtheorem{theoremletter}{Theorem}

\newtheorem{lemmaletter}[theoremletter]{Lemma}
\def\be{\begin{equation}}
\def\ee{\end{equation}}
\def\bc{\begin{cases}}
\def\ec{\end{cases}}
\def\bs{\begin{split}}
\def\es{\end{split}}

\newcommand{\na}{\mathbb{N}}
\newcommand{\z}{\textbf{z}}

\newcommand{\re}{\mathbb{R}}
\newcommand{\rn}{\mathbb{R}^{N}}
\newcommand{\dm}{\mathcal{DM}_{\infty}(\Omega)}

\newcommand{\huz}{W^{1,p}_0 (\Omega)}
\newcommand{\lp}[1]{L^{#1}(\Omega)}
\newcommand{\bv}{BV(\Omega)}
\newcommand{\norma}[2]{\|#1\|_{#2}\}}
\newcommand{\wc}{\rightharpoonup}
\newcommand{\Div}{\textrm{div}}

\def\al{\alpha}

\def\de{\delta}
\def\eps{\varepsilon}

\def\vp{\varphi}
\def\un{u_{n}}

\def\dys{\displaystyle}
\long\def\salta#1{\relax}

\def\rife#1{(\ref{#1})}
\def\io{\int_{\Omega}}
\def\bv{BV(\Omega)}
\def\dcal{\mathcal{D}'(\Omega)}
\def\norma#1#2{\|#1\|_{\lower 4pt \hbox{$\scriptstyle #2$}}}

\def\mis{{\rm meas}}
\def\dive{{\rm div}}
\def\D{D}
\def\Div{\textrm{div}}
\def\g{\gamma}

\author[G. da Silva]{Genival da Silva}
\address{Genival da Silva,
Department of Mathematics,
Texas A\&M University San Antonio,
San Antonio, TX
USA}
\email{gdasilva@tamusa.edu}

\title{An equation involving the 1-Laplacian and a singular nonlinearity}

\begin{document}

\begin{abstract}
We prove existence  of solutions to a nonlinear degenerate elliptic equation of the form
$$
\bc
-\Delta_{1} u+ \frac{|D u|}{(1-u)^{\gamma}}=g & \mbox{in $\Omega$,}\\
u=0 \hfill & \mbox{on $\partial\Omega$,}
\ec
$$
in a suitable sense, where $\Omega $ is a bounded open set of $\rn$, $\g>0$ is a fixed parameter, $g\geq 0 $ is a function in some Lebesgue space.
\end{abstract}

\maketitle

\section{Introduction and statement of results}

Let $\Omega$ be an open bounded set of $\rn$. In this work, we consider the problem
\be\label{main}\tag{D}
\bc
-\Div\left(\frac{\D u}{|\D u|}\right) + \frac{|D u|}{(1-u)^{\gamma}}=g & \mbox{in $\Omega$,}\\
u=0 \hfill & \mbox{on $\partial\Omega$,}
\ec
\ee
where $\g>0$ is a fixed parameter and $g$ belongs to $\lp N$, $g \geq 0$.

The natural space to study solutions to \eqref{main} is $\bv$, namely, the space of functions of bounded variation in $\Omega$. Recall that a function $u$ is of bounded variation if $u\in\lp1$ and its weak derivative $Du$ is a Radon measure with finite total variation. Moreover, if $u\in BV(\Omega)$ then then measure $Du$ has a decomposition into absolutely continuous, jump and Cantor parts, with respect to the Lebesgue measure, i.e. $Du=D^{a}u+D^{j}u+D^{c}u$. Functions of bounded variation have a rich theory and many applications in geometric measure theory and in the calculus of variations, see \cite{evans,ambr}.

In order to make sense of a notion of solution to Problem \ref{main}, we need Anzellotti's theory of vector fields having bounded measure as divergence. We now recall some results from \cite{anz}, \cite{merc}. 

Define
\[
\dm = \{\z\in L^{\infty}(\Omega,\re^{N})\,|\, \Div(\z)\text{ is a bounded measure in }\Omega\}.
\]
For $\z\in\dm$ and $u\in BV(\Omega)\cap C(\Omega)\cap\lp\infty$, we define the distribution $(\z,Du):C_{0}^{\infty}(\Omega)\to \re$ by
\[
\langle (\z,Du),\vp\rangle :=-\io u\vp \Div(\z)-\io u\z\cdot D\vp.
\]
The measure $(\z,Du)$ and $|(\z,Du)|$ are absolutely continuous with respect to $|Du|$. Moreover, if $L$ is Lipschitz and $u$ continuous then
\be\label{rnd}
\frac{d(\z,DL(u))}{d|Du|} = \frac{d(\z,Du)}{d|Du|}\quad\text{ $|Du|$-a.e. in $\Omega$} 
\ee
In \cite{anz} a notion of trace in $L^{\infty}(\partial\Omega)$ is defined for $\z\in\dm$. More precisely, if we denote the trace of $\z$ by $[\z,\nu]$, then the following \textit{Green's formula} holds:
\[
\io u\,\Div(\z) + \io (\z,Du) = \int_{\partial \Omega} u[\z,\nu]d\mathcal{H}^{N-1}
\]
In fact, even if $u$ is not continuous, the definition of  $(\z,Du)$ still makes sense if we substitute $u$ by its precise representative $u^{*}$:
\[
\langle (\z,Du),\vp\rangle :=-\io u^{*}\vp \Div(\z)-\io u\z\cdot D\vp.
\]
Moreover, if $D^{j}u=0$ equation \eqref{rnd} is still true, and more importantly, for $u,v\in BV(\Omega)\cap \lp\infty$:
\[
(v\z,Du) = v^{*}(\z,Du)
\]

\medskip 

We define the following notion of solution to problem \rife{main}. 

\begin{dhef}\label{def}\rm
A function $u\in\bv$ is a solution for problem \rife{main} if $0 \leq u<1$ a.e. in $\Omega$, $|D^{j}u|=0$, and there exists a vector field $\z\in \dm$, with $\norma{\z}{\infty}\le 1$, such that 
\be\label{def1}
\bc
-\Div(\z) + \frac{|D u|}{(1-u)^{\gamma}} = g \text{ in }\dcal\\
(\z,Du)=|Du| \text{ as measures in } \Omega\\
u_{|\partial\Omega} = 0 \text{ in the trace sense}
\ec
\ee
\end{dhef}

Our first existence result is the following.

\begin{theo}\label{t1}\sl
If $\g>1$ and $\norma{g}{\lp N}\le 1$, then problem \rife{main} has at least one solution.
\end{theo}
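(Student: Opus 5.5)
We will obtain a solution as the limit, as $p\to1^+$, of solutions of $p$-Laplacian problems, combined with a truncation of the singular term. For $p>1$ and $n\in\na$ we consider
\[
-\Delta_p u_p + \frac{|\nabla u_p|^{p-1}\,\mathrm{sign}(\nabla u_p)\ \text{-type term}}{(1-T_{1-1/n}(u_p^+))^{\g}}\;\to\;
-\Div(|\nabla u_p|^{p-2}\nabla u_p) + \frac{|\nabla u_p|^{p}}{(1-u_p)^{\g}}=g ,
\]
with the lower-order term $|\nabla u|^p/(1-u)^\g$ written in the form that naturally arises in the limit. The existence of $u_p\in\huz\cap\lp\infty$ for this quasilinear problem with natural growth follows from standard results (Boccardo--Murat--Puel type), after truncating the singularity and passing to the limit in the truncation. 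Nonnegativity of $u_p$ follows from testing with $u_p^-$ and using $g\ge0$.

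The key a priori estimates, in order, are as follows.

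\emph{(i) Upper bound $u_p<1$.} For $\g>1$ the change of variables $\Psi(s)=\int_0^s (1-t)^{-\g}\,dt$ satisfies $\Psi(s)\to+\infty$ as $s\to1^-$. Testing with suitable functions of $\Psi(u_p)$ (a Cole--Hopf-like transformation absorbing the gradient term) gives a uniform bound on $\Psi(u_p)$ in some $L^q$. Hence $\{u_p\ge 1\}$ is negligible, uniformly in $n$ and $p$.

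\emph{(ii) BV estimate.} Testing with $u_p$ and using Sobolev's inequality with $\norma{g}{\lp N}\le 1$ (the constant $S_1$ normalized so that $\|v\|_{N/(N-1)}\le\int|\nabla v|$) gives
\[
\io|\nabla u_p|^p+\io\frac{|\nabla u_p|^p u_p}{(1-u_p)^\g}\le \|g\|_N\|u_p\|_{N/(N-1)}.
\]
The smallness of $g$ (exactly $\le1$), together with the positive gradient term, controls $\io|\nabla u_p|$ via Young's inequality $\io|\nabla u_p|\le \frac1p\io|\nabla u_p|^p+\frac{p-1}{p}|\Omega|$. This yields $u_p\to u$ in $L^1$ and $Du_p\rightharpoonup Du$ weakly-* as measures, with $\io\frac{|\nabla u_p|^p}{(1-u_p)^\g}$ bounded, using the equation tested with $1$ against a cutoff. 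Moreover $|\nabla u_p|^{p-2}\nabla u_p\rightharpoonup \z$ weakly in $L^{q}$ for every $q<\infty$, and the standard argument gives $\norma{\z}{\infty}\le1$.

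\emph{(iii) Passing to the limit.} We show $-\Div\z+\mu=g$, where $\mu$ is the weak limit of the lower-order terms. We identify $\mu=|Du|/(1-u)^\g$ by lower semicontinuity in one direction. For the other direction we test with $e^{\pm\text{something}}\vp$, or with $\Psi$-type test functions, to obtain $(\z,D\Psi(u))\ge|D\Psi(u)|$. Then \eqref{rnd} and the product rule $(v\z,Du)=v^*(\z,Du)$ convert this into $(\z,Du)=|Du|$, once $D^ju=0$ is known. The absence of jumps follows from the fact that the lower-order term is a finite measure in which $u\mapsto\Psi(u)$ blows up: a jump would give infinite mass. Alternatively, the equation comparing $\Div\z$, which is absolutely continuous on jump sets (as it vanishes on $\mathcal H^{N-1}$-null sets), forces $u^+=u^-$ $\mathcal H^{N-1}$-a.e. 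Finally, the boundary condition is obtained by the usual argument that either $[\z,\nu]=-\mathrm{sign}\,u$ or the penalized boundary term vanishes. Here the positive absorption combined with $\|g\|_N\le1$ forces the boundary term $\int_{\partial\Omega}|u|$ to vanish.

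The main obstacle is step (iii): the identification of the weak limit of the nonlinear gradient term and, simultaneously, the proof that $D^ju=0$ and $u_{|\partial\Omega}=0$. I would first try to work entirely with $v_p=\Psi(u_p)$, for which the equation formally becomes a linear-in-gradient problem, and pass to the limit there before transferring the results back to $u$.
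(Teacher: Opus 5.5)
Your plan follows the paper's route in outline: $p\to1^+$, the Cole--Hopf test $e^{-\Psi(u_p)}\vp$ (which cancels the absorption term and gives $-\Div(e^{-\Psi(u)}\z)=e^{-\Psi(u)}g$), $\z$ as the weak limit of $|\nabla u_p|^{p-2}\nabla u_p$, lower semicontinuity and Green's formula. The step that fails is $D^ju=0$, and your plan makes it load-bearing, because you get $(\z,Du)=|Du|$ from $(\z,D\Psi(u))\ge|D\Psi(u)|$ only ``once $D^ju=0$ is known''. Neither justification you give works:
\begin{itemize}
\item A jump with $0\le u^-<u^+<1$ contributes $(\Psi(u^+)-\Psi(u^-))\,\mathcal H^{N-1}\llcorner J_u$ to $|D\Psi(u)|$. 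This is finite, so the blow-up of $\Psi$ at $1$ is irrelevant.
\item $|\Div\z|\ll\mathcal H^{N-1}$ does not stop $\Div\z$ from charging the rectifiable set $J_u$. Restricted to the Lebesgue-null set $J_u$, the equation only says $\Div\z\llcorner J_u=|D^j\Psi(u)|$, which is consistent.
\end{itemize}

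The paper argues in the opposite order. It first proves $(\z,Du)=|Du|$ without any information on jumps: test with $u_p\vp$, use lower semicontinuity of $\int\vp|\nabla u_p|^p$ and of the absorption term, and compare with $-\Div\z+|D\Psi(u)|=g$ tested against $u^*\vp$. Only then does it use $(\z,D^ju)=|D^ju|$ in Green's formula on thin cylinders around rectifiable pieces of $J_u$. This gives $\int_{J_u}u^*(1-u^*)^{-\g}\,d|D^ju|\le0$, which kills the jump part since $u^*>0$ on $J_u$. Equivalently, $(\z,Du)=|Du|$ forces both normal traces of $\z$ on $J_u$ to equal $1$, so $\Div\z\llcorner J_u=0$.

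Your boundary-condition step also misattributes the mechanism: it does not come from $\|g\|_{L^N(\Omega)}\le1$. Testing with $u_p$, lower semicontinuity of the absorption term produces an extra boundary integral $\int_{\partial\Omega}F(u)\,d\mathcal H^{N-1}$, with $F(s)=\int_0^s t(1-t)^{-\g}dt\ge s^2/2$. Green's formula only supplies $-\int_{\partial\Omega}[\z,\nu]u\le\int_{\partial\Omega}|u|$, so $\int_{\partial\Omega}F(u)\,d\mathcal H^{N-1}=0$.

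That said, your estimate (ii) closes everything at once if you use the true constant $S_{N,1}=(N\omega_N^{1/N})^{-1}<1$ rather than ``normalizing'' it. Put $\theta:=S_{N,1}\|g\|_{L^N(\Omega)}<1$. Testing with $u_p$ and dropping the absorption term gives $\|\nabla u_p\|_p^p\le\theta\|\nabla u_p\|_1\le\theta\|\nabla u_p\|_p|\Omega|^{1/p'}$. Hence $\int_\Omega|\nabla u_p|^p\le\theta^{p'}|\Omega|$ and $\int_\Omega|\nabla u_p|\le\theta^{1/(p-1)}|\Omega|\to0$. So the limit is $u\equiv0$, and $D^ju=0$, $(\z,Du)=|Du|$ and the boundary condition are all trivial. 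The same bound yields $\|\z\|_\infty\le\theta$, and $-\Div\z=g$ follows from your Cole--Hopf identity because $e^{-\Psi(u_p)}\to1$ a.e. This route is far shorter than the paper's. It also shows that, under the hypotheses of this theorem, the solution constructed is the trivial one.
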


In \cite{bocpet}, the semilinear version of \eqref{main} is studied. More precisely, the authors prove existence and nonexistence results for the following problem:
\be
\bc\label{lin}
-\Delta u+ \frac{|D u|^{2}}{(1-u)^{\gamma}}=g & \mbox{in $\Omega$,}\\
u=0 \hfill & \mbox{on $\partial\Omega$.}
\ec
\ee
They show that if $\gamma\ge 2$, then \eqref{lin} admits a solution. Notice that their result does not require any smallness assumption on $g$, which is expected due to the linearity of the operator $-\Delta$.

In \cite{bbm}, the authors study a related quasilinear version of \eqref{lin}. More precisely, they prove the existence of solutions for the following problem:
\be
\bc\label{lin2}
-\Div(a(x,u,Du))+ b(x,u,Du) =g & \mbox{in $\Omega$,}\\
u=0 \hfill & \mbox{on $\partial\Omega$.}
\ec
\ee
where $a(x,s,p)$ is a Leray--Lions operator and $b(x,s,p)$ satisfies natural growth conditions together with the sign condition
\[
b(x,s,p)s\ge 0.
\]
In \cite{mazon}, the authors study a related but nonsingular problem. More precisely, they prove the existence of solutions for
\be
\bc
-\Div\left(\frac{\D u}{|\D u|}\right) + |D u|=g & \mbox{in $\Omega$,}\\
u=0 \hfill & \mbox{on $\partial\Omega$.}
\ec
\ee
The notion of solution considered there is similar to ours, although the problem is variational. Moreover, no smallness assumption on $g$ is required, which is again expected due to the absence of singular terms. The authors also provide explicit solutions in a ball, suggesting that some form of rigidity should hold when $\norma{g}{\lp N}$ is sufficiently small. This is indeed the case and will be addressed in Theorem~\ref{t3} below.

If we drop the condition on $\gamma$, we are still able to obtain solutions under suitable assumptions on $g$. More precisely, we have

\begin{theo}\label{t2}\sl
If $g=\lambda f$ and $\lambda< \frac{1}{S_{N,1}\norma{f}{\lp N}}$, with $f\in\lp N$ nonnegative, then problem \rife{main} has a solution for every $\gamma\ge 0$.
\end{theo}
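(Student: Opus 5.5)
We approximate the $1$-Laplacian by the $p$-Laplacian and pass to the limit $p\to 1^{+}$. The singular term is handled by an \emph{a priori} $L^{\infty}$ bound $\|u\|_{\infty}<1$, which is where the smallness of $\lambda$ enters; since $\gamma\ge 0$ is arbitrary, this bound must not rely on the singular term at all. Here $S_{N,1}$ denotes the best constant in the Sobolev inequality $\|v\|_{L^{N/(N-1)}}\le S_{N,1}|Dv|(\Omega)$ for $v\in BV$ with zero trace.

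\textbf{Approximate problems.} For $p>1$ and $n\in\na$ we consider
\[
-\Delta_p u_p + \frac{|Du_p|^{p}}{(1-T_{1-1/n}(u_p^{+}))^{\gamma}} = \lambda T_n(f),\qquad u_p\in\huz ,
\]
where $T_k$ is the truncation at level $k$. We take $|Du_p|^p$ rather than $|Du_p|$ so that natural growth theory (as in \cite{bbm}) gives existence of $u_p\ge 0$. Nonnegativity follows by testing with $u_p^-$.

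\textbf{Key estimate: $\|u\|_\infty<1$ uniformly.} We test with $G_k(u_p)=(u_p-k)^+$, $k\ge0$, and drop the nonnegative lower order term:
\[
\int_{A_k}|Du_p|^p\le \lambda\int_{A_k} f\,G_k(u_p).
\]
By Hölder and Sobolev on $A_k=\{u_p>k\}$,
\[
\int |DG_k(u_p)|\le |A_k|^{1/p'}\Big(\int|DG_k|^p\Big)^{1/p},
\]
and $\int fG_k\le \|f\|_{L^N(A_k)}S_{N,1}\int|DG_k|$. Combining these, we get $\|DG_k(u_p)\|_{L^1}\le (\lambda S_{N,1}\|f\|_N)^{1/(p-1)}|A_k|$. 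Since $\lambda S_{N,1}\|f\|_N<1$, the right side tends to $0$ exponentially as $p\to1$.

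This yields two things. First, with $k=0$ we get a uniform $BV$ bound, in fact $\|Du_p\|_{L^1}\to0$. Second, since a Stampacchia-type iteration is poorly adapted here, we instead argue directly: $u_p\to u$ in $L^1$ and $|Du|(\Omega)+\int_{\partial\Omega}|u|\le\liminf\|Du_p\|_1=0$, so $u\equiv0$.

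So the limit is $u=0$, which trivially satisfies $0\le u<1$ and $D^ju=0$. This is consistent with the rigidity announced before the statement: for small data the solution is $0$. The task therefore reduces to producing the vector field $\z$ with $\|\z\|_\infty\le1$ and $-\Div\z=\lambda f$, together with the identity $(\z,Du)=|Du|$, which holds trivially since $Du=0$. The boundary condition holds trivially as well.

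\textbf{Constructing $\z$.} We take $\z$ as the weak-$*$ limit in $L^\infty$ (after the standard argument, in every $L^q$) of $|Du_p|^{p-2}Du_p$. Using the estimate above, $\int|Du_p|^{p}\le \lambda\|f\|_N S_{N,1}\|Du_p\|_1$, so $\||Du_p|^{p-1}\|_{L^{q}}$ is bounded for $q<p'$. The usual argument (\cite{mazon}) then gives $\|\z\|_\infty\le1$.

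Passing to the limit in the equation requires the lower order term $|Du_p|^p(1-u_p)^{-\gamma}$ to vanish in $L^1$. This is the main obstacle, since $(1-u_p)^{-\gamma}$ could be large where $u_p$ is near $1$. We handle it by testing the approximate equation with $1$ on compact sets, or more precisely by estimating $\int_{\{u_p>1/2\}}$ via $G_{1/2}$. The set $\{u_p>1/2\}$ has measure $\le 2\|u_p\|_1\to0$, while on $\{u_p\le 1/2\}$ the weight is bounded by $2^\gamma$ and $\int|Du_p|^p\to0$. On the bad set we use the equation tested with a cutoff: its contribution is bounded by $\lambda\int_{\{u_p>1/2\}} T_n(f)\to0$ plus a term controlled by $|Du_p|^{p-1}$ integrability.

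Hence $-\Div \z=\lambda f$ in $\dcal$, so $\z\in\dm$, and $(u,\z)=(0,\z)$ is a solution. Finally, we let $n\to\infty$ through a diagonal choice $n=n(p)$.
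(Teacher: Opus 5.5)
Your argument is correct in substance, but it takes a genuinely different route from the paper.

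\textbf{The paper's route.} It uses Lemma~\ref{lemA} to get $\|u_p\|_{L^\infty(\Omega)}\le C_0\,(S_{N,p}\lambda\|f\|_{L^N(\Omega)})^{1/(p-1)}$, but records only that the limit satisfies $\|u\|_{L^\infty(\Omega)}<1$. It then reruns the whole proof of Theorem~\ref{t1}: existence for the $p$-problem, construction of $\z$, the test function $e^{-\Phi(u)}\vp$ to obtain the equation and $(\z,Du)=|Du|$, and the jump-set and trace arguments. The uniform distance from the singularity $u=1$ is what replaces the hypothesis $\gamma\ge p$.

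\textbf{Your route.} With $\theta=\lambda S_{N,1}\|f\|_{L^N(\Omega)}<1$, you observe that $\|DG_k(u_p)\|_{L^1(\Omega)}\le\theta^{1/(p-1)}|A_k|$ forces $\|Du_p\|_{L^1(\Omega)}\to0$, hence $u\equiv0$. Then $D^ju=0$, $(\z,Du)=|Du|$ and the boundary condition are trivial, and everything reduces to finding $\z$ with $\|\z\|_\infty\le1$ and $-\Div\z=\lambda f$. The only real work left is to show that the lower order term vanishes in $L^1$. Your splitting at level $1/2$ does this uniformly in $n$ and without any $L^\infty$ bound, so $\gamma\ge0$ plays no role.

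\textbf{Comparison.} The paper's bound also tends to $0$ as $p\to1^+$, so its solution is the trivial one as well. Your route makes this explicit and works with $S_{N,1}$ directly, with no need for $S_{N,p}\to S_{N,1}$. It also avoids all the delicate steps of the proof of Theorem~\ref{t1}, which are not really exercised here. In fact $\z$ could be produced directly by Hahn--Banach, since $|\io\lambda f\vp|\le\theta\|D\vp\|_{L^1(\Omega)}$ for $\vp\in W^{1,1}_0(\Omega)$.

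\textbf{Details to repair.}
\begin{itemize}
\item Your coefficient $(1-T_{1-1/n}(s^+))^{-\gamma}$ equals $1$ for $s<0$. So the sign condition $b(x,s,\xi)s\ge0$ required by \cite{bbm} fails there. Moreover, testing with $-u_p^-$ only gives $\io|Du_p^-|^p\le\io|Du_p^-|^p\,u_p^-$, not $u_p\ge0$. Let the coefficient vanish continuously for $s\le0$, as the paper's $h_n$ does.
\item On $\{u_p>1/2\}$ no term involving $|Du_p|^{p-1}$ appears. Test with $\frac{1}{\eps}T_\eps(G_{1/2}(u_p))$ and drop the nonnegative principal part to get $\int_{\{u_p>1/2\}}h_n(u_p)|Du_p|^p\le\lambda\int_{\{u_p>1/2\}}f$. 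This tends to $0$ because $|\{u_p>1/2\}|\le2\|u_p\|_{L^1(\Omega)}\to0$.
\item Your heading promises a uniform $L^\infty$ bound that you never prove, and a Stampacchia argument is in fact well adapted here. H\"older and Sobolev turn your estimate into $\io G_k(u_p)\le S_{N,1}\theta^{1/(p-1)}|A_k|^{1+1/N}$, so Lemma~\ref{lemA} gives $\|u_p\|_{L^\infty(\Omega)}\to0$. This is exactly the paper's route, though your argument does not need it.
\end{itemize}
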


In \cite{gia}, the authors study a class of Dirichlet problems involving the $1$-Laplacian together with strongly singular nonlinearities and lower order gradient terms. More precisely, they prove the existence of solutions for the following problem:
\be
\bc
-\Div\left(\frac{\D u}{|\D u|}\right) = \frac{|D u|}{u^{\gamma}}+\frac{g}{u^{\theta}} & \mbox{in $\Omega$,}\\
u=0 \hfill & \mbox{on $\partial\Omega$.}
\ec
\ee
where $0<\gamma\le 1$ and $0<\theta<1$.

The notion of solution considered there is similar to ours. The authors also derive suitable weak formulations and identify conditions under which the two singular lower order terms can be rigorously interpreted in the distributional setting.

Our last result consists in the analysis of Problem \eqref{main} when $g$ is constant and $\gamma=1$. We consider the following problem.

\be\label{main3}\tag{E}
\bc
-\Div\left(\frac{\D u}{|\D u|}\right) + \frac{|D u|}{1-u}=\lambda  & \mbox{in $\Omega$,}\\
u=0 \hfill & \mbox{on $\partial\Omega$.}
\ec
\ee
Recall that if $\Omega\subset\re^{N}$ is a bounded connected Lipschitz domain. Then by Poincar\'e inequality, there exists a constant $C=C(\Omega)>0$ such that for every $u\in BV_{0}(\Omega)$,
\[
\|u\|_{L^1(\Omega)}
\le
\frac1{h(\Omega)}\,|Du|(\Omega),
\]
where
\[
h(\Omega):=
\inf_{E\subset\Omega,\ |E|>0}
\frac{P(E)}{|E|}.
\]
The positive constant $h(\Omega)$ is called \textit{Cheeger} constant of $\Omega$. Notice that $h(\Omega)$ is well-defined. 

Indeed, since $\Omega$ is bounded, choosing $E=\Omega$ gives
\[
h(\Omega)\le \frac{P(\Omega)}{|\Omega|}<\infty.
\]
To show positivity, let $E\subset\Omega$ be measurable with finite perimeter.
By the isoperimetric inequality,
\[
P(E)\ge N\omega_N^{1/N}|E|^{\frac{N-1}{N}},
\]
and therefore
\[
\frac{P(E)}{|E|}
\ge
N\omega_N^{1/N}|\Omega|^{-1/N}.
\]

Taking the infimum over all admissible sets $E\subset\Omega$ yields
\[
0<
N\omega_N^{1/N}|\Omega|^{-1/N}
\le
h(\Omega)
\le
\frac{P(\Omega)}{|\Omega|}
<\infty.
\]

Therefore the Cheeger constant is well defined.
\begin{theo}\label{t3}\sl
If $\lambda \le h(\Omega)$, then problem \rife{main3} admits only the trivial solution $u\equiv 0$. On the other hand, if $\lambda > h(\Omega)$ and $\Omega=B_{1}$ is the unit ball, then nonconstant solutions exist.
\end{theo}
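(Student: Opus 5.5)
The argument has two independent parts. For $\lambda\le h(\Omega)$ I will test the equation with $u$ itself and use the Cheeger/Poincar\'e inequality together with the coarea formula. For $\lambda>h(\Omega)$ on $B_1$ I will build an explicit radial solution with a calibrating field $\z$.

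\medskip

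\emph{Triviality for $\lambda\le h(\Omega)$.} Let $u$ be a solution with field $\z$. Since $0\le u<1$, $u\in BV(\Omega)\cap\lp\infty$ is an admissible test function in Green's formula. Its trace vanishes, and $(\z,Du)=|Du|$, so
\[
|Du|(\Omega)+\io u^{*}\,\frac{|Du|}{1-u}=\lambda\io u .
\]
This needs $u^{*}\,|Du|/(1-u)$ to be a finite measure; a priori that is not clear. So I will first test with $T_{1-\delta}(u)$ and with truncations of $\log\frac{1}{1-u}$, and then let $\delta\to0$ by monotone convergence. I expect this justification to be the main technical obstacle, though it should be routine once the measure $|Du|/(1-u)$ is known to be locally finite; that local finiteness follows from the equation, since $g$ is bounded.

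Dropping the nonnegative middle term and applying the Poincar\'e inequality with the Cheeger constant gives
\[
|Du|(\Omega)\le \lambda\|u\|_{L^1(\Omega)}\le \frac{\lambda}{h(\Omega)}|Du|(\Omega).
\]
If $\lambda<h(\Omega)$ this forces $|Du|(\Omega)=0$, hence $u=0$ because the trace vanishes.

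If $\lambda=h(\Omega)$, all the inequalities must be equalities, so $\io u^{*}|Du|/(1-u)=0$. Because $D^{j}u=0$, the coarea formula applies to this term and gives
\[
\int_0^1\frac{t}{1-t}\,P(\{u>t\})\,dt=0 .
\]
Hence $P(\{u>t\})=0$ for a.e. $t\in(0,1)$. A set of zero perimeter with zero trace is null, so $|\{u>t\}|=0$ for a.e. $t>0$, and therefore $u\equiv0$.

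\medskip

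\emph{Nonconstant solutions on $B_1$ for $\lambda>h(B_1)=N$.} I look for a radially decreasing solution that is constant on a core ball $B_\rho$. Set $v=-\log(1-u)$, so that $|Du|/(1-u)=|Dv|$; where $u$ is strictly decreasing in $r$ I take $\z=-x/|x|$.

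\emph{Outer annulus $\rho<r<1$.} Here $-\Div\z=(N-1)/r$, and the equation becomes
\[
-v'(r)=\lambda-\frac{N-1}{r}.
\]

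\emph{Inner ball $r<\rho$.} Here $u$ is constant and I take $\z=-\lambda x/N$, so $-\Div\z=\lambda$. The constraint $|\z|\le1$ together with continuity of $\z$ at $r=\rho$ gives $\rho=N/\lambda<1$.

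Now define
\[
v(r)=\int_r^1\Bigl(\lambda-\frac{N-1}{s}\Bigr)ds \quad (\rho\le r\le1), \qquad v\equiv v(\rho)\ \text{on } B_\rho ,
\]
and set $u=1-e^{-v}$. The integrand is positive for $s>\rho>(N-1)/\lambda$. Hence $u$ is Lipschitz and nonconstant, $0\le u<1$, and $u=0$ on $\partial B_1$. Also $D^{j}u=0$ automatically.

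It remains to check the conditions of the definition of solution. The field $\z$ is Lipschitz with $\|\z\|_\infty\le1$, so $\Div\z\in L^\infty$ and $\z\in\dm$. Moreover $\z\cdot\nabla u=|\nabla u|$ wherever $\nabla u\neq0$, which gives $(\z,Du)=|Du|$. Finally, the equation holds pointwise a.e. in both regions, and distributionally across $r=\rho$ because $\z$ is continuous there.
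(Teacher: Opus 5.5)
Your proof is correct, and on $B_1$ it produces exactly the paper's solution: your ODE for $v=-\log(1-u)$ integrates to the same $u$, and you use the same field $\z$.

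For $\lambda\le h(\Omega)$ you start as the paper does, by testing with $u$, but you diverge in two places.

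\emph{Which function gets the Cheeger inequality.} The paper keeps the singular term and rewrites the identity as $\io|Dv|=\lambda\io u$ with $v=-\log(1-u)$. It then applies the Cheeger inequality to $v$ and uses $v\ge u$. You instead drop the nonnegative singular term and apply the Cheeger inequality to $u$ itself.

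\emph{The borderline case $\lambda=h(\Omega)$.} The paper concludes from $\io v=\io u$ and the elementary strict inequality $-\log(1-s)>s$ on $(0,1)$; no further measure theory is needed. You extract $\io u^{*}|Du|/(1-u^{*})=0$ and conclude via the coarea formula, plus the fact that a set with zero relative perimeter and zero trace is null.

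What each buys: the paper's ending is shorter. However, it tacitly needs $v\in BV(\Omega)$ with zero trace and the chain rule $|Dv|=|Du|/(1-u)$ before Cheeger can be applied to $v$. Both of these rely on $D^{j}u=0$ and on a truncation argument, since $-\log(1-s)$ is not Lipschitz on $[0,1)$. Your route applies Cheeger only to $u$, which is already known to lie in $BV$ with zero trace. The price is the coarea step, which also uses $D^{j}u=0$.

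The truncation argument you flag as the main technical obstacle is unnecessary. Since $\z\in\dm$, the equation gives $\frac{|Du|}{1-u}=\lambda+\Div(\z)$ as measures, and the right-hand side has finite total variation. So $u^{*}|Du|/(1-u^{*})$ is a finite measure, and $u$ can be inserted directly into Green's formula.
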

\begin{remark}
Compare the theorem above with Example~4.1 of \cite{mazon}, which involves a similar but nonsingular equation. Remarkably, the presence of the singularity does not prevent the existence of solutions in the result above.
\end{remark}
\begin{remark}
Notice that in dimension one, $B_{1}=(-1,1)$ and $h(B_{1})=1$. In this case, the solution is given by
\be
u(x)=
\bc
1-e^{1-\lambda },\;  |x|\le \frac{1}{\lambda},\\
1-e^{\lambda (|x|-1)},\; \frac{1}{\lambda}<|x|<1,
\ec
\ee
and the associated vector field is
\be
\z=
\bc
-\lambda x,\;  |x|\le \frac{1}{\lambda},\\
-\frac{x}{|x|},\; \frac{1}{\lambda}<|x|<1.
\ec
\ee

\begin{figure}[h]
\begin{center}
 \includegraphics[scale=0.5]{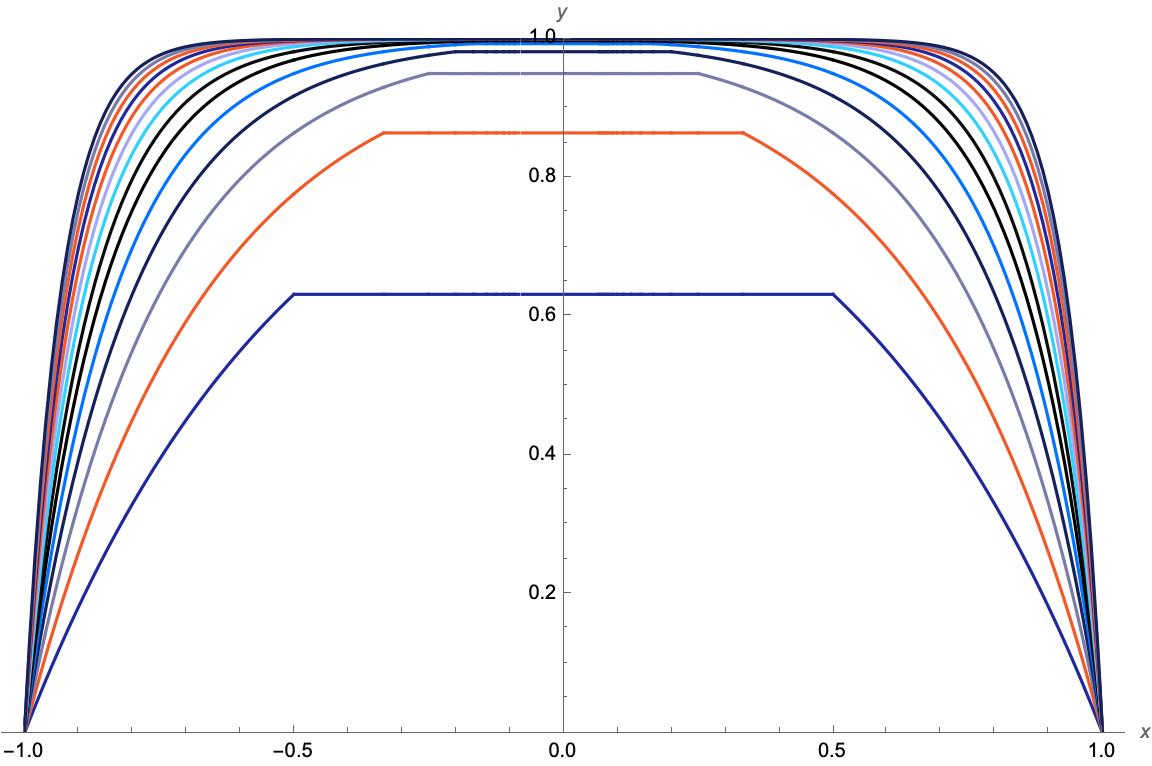}   
 \caption{The graph of $u(x)$ for $\lambda=2,3,4,\ldots,20$.}
\end{center}
\end{figure}

\end{remark}
\medskip

The plan of the paper is as follows. In Section~2 we will prove Theorem \ref{t1} by 
approximating \rife{main} with the equivalent $p$-Laplacian problems. In Section~3 we prove Theorem \ref{t2} and Theorem \ref{t3}.

\medskip

{\bf Notation.} 
\medskip

The following truncation functions will be used throughout the text.
\[
T_k (s)= \max ( -k, \min ( s, k)),\quad
G_k (s)=s-T_k (s)
\]
The Sobolev constant will be denoted by $S_{N,p}$. Recall that for $u\in\huz$ we have $\norma{u}{\lp {p^{*}}}\le S_{N,p} \norma{Du}{p}$, where $p^{*}=\frac{Np}{N-p}$.  We denote $p_{*}$ the Holder conjugate of $p^{*}$.

Throughout the paper, the letter $C$ denotes a positive constant that may vary from line to line.

\section{Proof of Theorem \ref{t1}}
The idea of the proof of Theorem~\ref{t1} is to consider an approximating $p$-Laplacian problem and derive uniform estimates in order to identify a candidate solution as $p\to 1^{+}$. More precisely, we consider
\be\label{pmain}\tag{$D_{p}$}
\bc
-\Div\left(|Du|^{p-2}\D u \right) + \frac{|D u|^{p}}{(1-u)^{\gamma}}=g & \mbox{in $\Omega$,}\\
u=0 \hfill & \mbox{on $\partial\Omega$.}
\ec
\ee

In this setting, the notion of solution is the standard weak one.

\begin{dhef}\label{def2}\rm
A function $u\in\huz$ is said to be a solution of problem \eqref{pmain} if $0 \leq u<1$ a.e. in $\Omega$, $\frac{|\D u|^p}{(1-u)^{\gamma}}\in L^{1}(\Omega)$, and
\[
\io |D u |^{p-2}D u \cdot D \vp +\io \frac{|\D u|^p}{(1-u)^{\gamma}} \vp = \io g \vp,
\]
for every $\vp\in \huz\cap\lp{\infty}$.
\end{dhef}

The proof consists of two main steps: first, proving existence for the approximating $p$-Laplacian problem, and second, showing that the corresponding solutions converge to a solution of the original problem as $p\to 1^{+}$.
\medskip

The following lemma will be used throughout the discussion below. For the proof, see Lemma~6.2 in \cite{boc13}.
\begin{lemmaletter}\label{lemA}
Let $f\in \lp 1$, $g(k)=\io |G_{k}(f)|$ and $A_{k}=\{ |f|>k \}$. Suppose
\[
g(k)\le \beta |A_{k}|^{\al}
\]
for some $\alpha>1$ and $\beta>0$. Then $f\in \lp \infty$ and 
\[
\| f\|_{\lp \infty}\le C\beta 
\]
for some $C=C(\alpha,\Omega)$. 
\end{lemmaletter}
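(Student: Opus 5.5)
This is the classical Stampacchia-type lemma, and the plan is to turn the integral hypothesis into a differential inequality for $g(k)$ and then integrate it.

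\textbf{Step 1: differentiate $g$.} Set $g(k)=\io |G_k(f)|=\int_k^\infty |A_s|\,ds$ for $k\ge 0$, where the equality is the layer-cake formula. Hence $g$ is absolutely continuous, nonincreasing, and satisfies $g'(k)=-|A_k|$ for a.e. $k$. Since $|A_k|\le|\Omega|<\infty$ and $f\in\lp1$, $g$ is finite, and $g(0)=\io|f|$.

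\textbf{Step 2: the differential inequality.} The hypothesis $g(k)\le \beta|A_k|^{\al}$ gives $|A_k|\ge (g(k)/\beta)^{1/\al}$, and therefore
\[
-g'(k)\ge \beta^{-1/\al}\, g(k)^{1/\al}\quad\text{for a.e. } k\ge 0 .
\]
On any interval where $g>0$, divide by $g^{1/\al}$ to get
\[
-\frac{d}{dk}\,\frac{g(k)^{1-1/\al}}{1-1/\al}\ge \beta^{-1/\al}.
\]
This step uses $\al>1$, so that $1-1/\al>0$.

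\textbf{Step 3: integrate.} Integrating from $0$ to $k$ gives $g(k)^{1-1/\al}\le g(0)^{1-1/\al}-(1-1/\al)\beta^{-1/\al}k$. So $g$ vanishes at
\[
k_0=\frac{\al}{\al-1}\,\beta^{1/\al}\, g(0)^{(\al-1)/\al}.
\]
Once $g(k_0)=0$, we have $|f|\le k_0$ a.e.

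\textbf{Step 4: bound $g(0)$ by $\beta$.} Apply the hypothesis at $k=0$: $g(0)\le\beta|A_0|^{\al}\le \beta|\Omega|^{\al}$. Substituting into $k_0$ gives
\[
\|f\|_{\lp\infty}\le \frac{\al}{\al-1}\,\beta^{1/\al}\,\beta^{(\al-1)/\al}\,|\Omega|^{\al-1}=\frac{\al}{\al-1}|\Omega|^{\al-1}\,\beta .
\]
This yields $C=\frac{\al}{\al-1}|\Omega|^{\al-1}$.

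The hardest point is only the justification in Steps 1–2: the a.e. differentiability and absolute continuity of $g$, and the chain rule for $g^{1-1/\al}$. Both follow from the layer-cake representation. The inequality only needs to hold on the set where $g>0$, and $g$ stays at zero once it reaches zero.
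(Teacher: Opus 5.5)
Your proof is correct and is essentially the standard argument behind the result the paper cites (Lemma 6.2 of Boccardo--Croce; the paper gives no proof of its own). The layer-cake identity $g(k)=\int_k^\infty|A_s|\,ds$ turns the hypothesis into $g(k)\le\beta\,(-g'(k))^{\alpha}$, and integrating $(g^{1-1/\alpha})'$ shows that $g$ vanishes beyond $\frac{\alpha}{\alpha-1}\beta^{1/\alpha}g(0)^{1-1/\alpha}$; the hypothesis at $k=0$ (or its limit as $k\to0^+$ by continuity of $g$, if it is only assumed for $k>0$) then gives $C=\frac{\alpha}{\alpha-1}|\Omega|^{\alpha-1}$.
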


{\bf Step 1. The p-Laplacian approximation} 
\medskip 

We prove the existence of solutions, in the sense of the definition above, to the following problem:

\be\label{pmain2}\tag{P}
\bc
-\Div\left(|Du|^{p-2}\D u \right) + \frac{|D u|^{p}}{(1-u)^{\gamma}}=g & \mbox{in $\Omega$,}\\
u=0 \hfill & \mbox{on $\partial\Omega$,}
\ec
\ee

\begin{theo}\label{tp1}\sl
If $\gamma\ge p$ then problem \eqref{pmain2} has a solution $u\in\huz$.
\end{theo}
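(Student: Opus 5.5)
We prove Theorem~\ref{tp1} by truncating the singular term, deriving estimates uniform in the truncation, and passing to the limit. This follows the strategy of \cite{bocpet} for the case $p=2$.

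\textbf{Approximate problems.} For $n\in\na$ we consider
\[
-\Div(|Du_n|^{p-2}Du_n) + \frac{|Du_n|^p}{(1-T_{1-1/n}(u_n^+))^{\gamma}}\,\frac{1}{1+\frac1n|Du_n|^p} = g .
\]
Here the lower order term is bounded in $x$ and $\xi$ and satisfies a sign-free growth bound. The Leray--Lions theory for bounded perturbations (pseudomonotone operators) therefore gives a solution $u_n\in\huz$.

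\textbf{Sign and $L^\infty$ bounds.} Testing with $u_n^-$ gives $u_n\ge 0$, since $g\ge0$ and the lower order term is nonnegative. Testing with $G_k(u_n)$ and dropping the nonnegative lower order term gives
\[
\io|DG_k(u_n)|^p\le \io g\,G_k(u_n).
\]
Via Sobolev and H\"older this yields an estimate of the form $\io G_k(u_n)\le \beta|A_k|^{\alpha}$ with $\alpha>1$ and $\beta$ independent of $n$. Lemma~\ref{lemA} then gives $\|u_n\|_\infty\le C$ uniformly in $n$. This bound alone is not below $1$, so the key point is the next step.

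\textbf{Keeping $u_n$ away from $1$.} This is where $\gamma\ge p$ enters, and I expect it to be the main obstacle. I would use the exponential/Hopf--Cole type test functions of \cite{bocpet}. Set
\[
\Psi(s)=\int_0^s (1-t)^{-\gamma/(p-1)}\,dt .
\]
Because $\gamma\ge p$, we have $\gamma/(p-1)>1$ when $p\ge\gamma$ fails, and in general $\Psi(s)\to\infty$ as $s\to1^-$ exactly when $\gamma\ge p-1$. The idea is to choose $\varphi=e^{\Phi(u_n)}-1$ with $\Phi'(s)=(1-s)^{-\gamma}$ (suitably truncated). Then the principal part produces $|Du_n|^p\Phi'(u_n)e^{\Phi}$, which is compensated by the lower order term. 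This leads to a bound of the form
\[
\io |D\,\Lambda(u_n)|^p\le C\io g\,(\dots),
\qquad \Lambda(s)=\int_0^s(1-t)^{-\gamma/p}\,dt .
\]
Since $\gamma\ge p$, $\Lambda(s)\to\infty$ as $s\to1$, so a uniform bound on $\Lambda(u_n)$ in $W^{1,p}_0$, hence in $L^{p^*}$, forces $|\{u_n\ge 1-\delta\}|\to0$ uniformly as $\delta\to0$. Applying Lemma~\ref{lemA} to $\Lambda(u_n)$ would upgrade this to $u_n\le 1-\delta_0$ uniformly. If the $L^\infty$ upgrade fails, I would settle for $\Lambda(u_n)$ bounded, which still gives $u<1$ a.e. and a finite singular integral.

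\textbf{Passage to the limit.} Testing with $u_n$ gives $u_n$ bounded in $\huz$. The lower order term is then bounded in $L^1$: test with $\varphi=1-(1-u_n)^{\gamma}$-type functions or with $\Lambda$-type ones to get $\io \frac{|Du_n|^p}{(1-u_n)^\gamma}\le C$. The standard Boccardo--Murat argument (test with $T_\epsilon(u_n-u)$ times an exponential factor to absorb the natural growth term) gives $Du_n\to Du$ a.e. and strongly in $L^p$. The lower order terms are equi-integrable on sets where $u\le 1-\delta$; the previous step handles the complement, either trivially if $u_n\le1-\delta_0$ or by Vitali/Fatou plus the $\Lambda$ bound. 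Hence we can pass to the limit with test functions $\varphi\in\huz\cap\lp\infty$, and $u$ solves \eqref{pmain2} in the sense of the definition.
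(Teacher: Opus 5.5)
Your overall skeleton matches the paper's: truncate, get $u_n\ge0$ and an $L^\infty$ bound via Lemma~\ref{lemA}, use $\gamma\ge p$ so that $\int_0^1(1-t)^{-\gamma/p}\,dt=\infty$, then Boccardo--Murat and Vitali. However, the step you single out as the crux does not work as written. Testing with $e^{\Phi(u_n)}-1$ gives no compensation. The principal part contributes $\int_\Omega\Phi'(u_n)e^{\Phi(u_n)}|Du_n|^p\ge0$ and the lower order term adds another nonnegative quantity. Meanwhile the right-hand side $\int_\Omega g\,(e^{\Phi(u_n)}-1)$ is uncontrolled, because $e^{\Phi}$ blows up at $1$. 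The sign that cancels is $1-e^{-\Phi(u_n)}$, and the paper simply tests with $\frac1\varepsilon T_\varepsilon(u_n)$.

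Even with a correct test, your factor $(1+\frac1n|Du_n|^p)^{-1}$ means you only bound $\int_\Omega H_n$, not $\int_\Omega|D\Lambda(u_n)|^p$. So the uniform smallness of $|\{u_n\ge 1-\delta\}|$ does not follow as claimed. A smaller issue of the same kind: your lower order term does not vanish on $\{u_n<0\}$. Testing with $u_n^-$ therefore only gives $\int_\Omega|Du_n^-|^p\le\int_\Omega H_n u_n^-$, which is not $\le 0$. The paper avoids this by taking $h_n(s)=0$ for $s<0$.

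The more serious gap is the passage to the limit near the singularity. A global bound $\int_\Omega\frac{|Du_n|^p}{(1-u_n)^\gamma}\le C$ plus Fatou yields only an inequality, i.e.\ a supersolution, not the equation. The ``standard'' exponential-factor argument for strong convergence needs the coefficient of $|Du_n|^p$ to be bounded, and that fails where $u_n$ is near $1$; in your scheme the coefficient is as large as $n^\gamma$. The missing ingredient is the level-set estimate obtained by testing with $\frac1\varepsilon T_\varepsilon(G_k(u_n))$:
\[
\int_{\{u_n\ge k+\varepsilon\}} h_n(u_n)|Du_n|^p\le\int_{\{u_n\ge k\}} g .
\]
Combine this with two further facts: the uniform bound $|\{u_n\ge 1-\tau\}|\le C\,\Phi_p(1-\tau)^{-p}$ for $n$ large, and the absolute continuity of $\int g$. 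The measure bound is where $\gamma\ge p$ enters, via $\int_\Omega|D\Phi_n(u_n)|^p=\int_\Omega h_n(u_n)|Du_n|^p\le\|g\|_{L^1(\Omega)}$ and Poincar\'e. Together they make the singular term uniformly small on $\{u_n\ge k\}$ for $k$ near $1$, hence equi-integrable.

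Since $h_n$ is increasing, the same estimate gives $\int_{\{u_n\ge k\}}|Du_n|^p\le\|g\|_{L^1(\Omega)}/h_n(k)$, which is small for $k$ near $1$ and controls $G_k(u_n)$. The truncations $T_k(u_n)$, $k<1$, are handled by the exponential test function, because $h_n(u_n)\le h(k)$ on $\{u_n\le k\}$. This is why the paper truncates only the coefficient, using $h_n(u_n)|Du_n|^p$ with $h_n$ bounded, increasing and $h_n(s)s\ge0$, with existence from Bensoussan--Boccardo--Murat. With that choice the identities above hold exactly. With your approximation you would need a different order of argument: first a.e.\ convergence of gradients, then $u\le1$ and $|\{u=1\}|=0$ for the limit via Fatou, and only then uniform smallness of $|\{u_n\ge k\}|$.
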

\begin{proof}
Fix $n\in\na$, and let $u_{n}\in \huz$ be a weak solution to the approximating problem
\be\label{app1}\tag{$P_{n}$}
-\Div\left(|Du_{n}|^{p-2}\D u_{n} \right) + h_{n}(u_{n})|D u_{n}|^{p}=g_{n}
\ee
where $g_n =T_n (g)$ and $h_n$ is defined by
\be \label{hn}
h_n (s)= \begin{cases}
\hfill 0 \hfill & \mbox{if $s < 0$,} \\
\hfill \frac{ns}{(1-s)^{\gamma}+\frac1n} \hfill & \mbox{if $0 \leq s < \frac1n$,} \\
\hfill \frac{1}{(1-s)^{\gamma}+\frac1n} \hfill & \mbox{if $\frac1n\le s<1$,} \\
\hfill n \hfill & \mbox{if $s\ge 1$.}
\end{cases}
\ee

Notice that $h_{n}$ is bounded, continuous, and satisfies
\[
h_{n}(s)\,s \geq 0
\qquad \forall s\in \re.
\]
By the classical result of \cite{bbm}, there exists a solution $u_{n}\in\huz$ to problem \rife{app1}. However, it is not immediately clear whether $u_{n}\in L^\infty(\Omega)$. In fact, \cite{bbm} states that, in general, the corresponding solutions may be unbounded.

Despite this, the explicit structure of the $p$-Laplacian allows us to obtain boundedness. Since $g_{n}\in L^\infty(\Omega)$ and $h_{n}(s)\,s\ge0$, taking $G_{k}(u_{n})$ as a test function in \rife{app1} yields an estimate for $\int_\Omega |G_{k}(u_n)|$ in the spirit of Lemma~\ref{lemA}. More precisely, we have
\be
\io|DG_{k}(u_{n})|^{p}\le \left(\int_{A_{k}} |g_{n}|^{p_{*}}\right)^{\frac1{p_{*}}}\left(\io |G_{k}(u_{n})|^{p^{*}}\right)^{\frac1{p^{*}}},
\ee
where $A_{k}=\{|u_{n}|>k\}$. Using the Sobolev inequality, we deduce
\be
\left(\io|G_{k}(u_{n})|^{p^{*}}\right)^{\frac{p-1}{p^{*}}}\le S_{N,p}\| g\|_{\lp N} |A_{k}|^{\frac{N-p_{*}}{p_{*}N}},
\ee
where $S_{N,p}$ denotes the Sobolev constant for the embedding $\huz\subset \lp{p^{*}}$.

Finally, applying Hölder's inequality to the left-hand side of the inequality above, we obtain
\be\label{gkb}
\io|G_{k}(u_{n})|\le S_{N,p}^{\frac1{p-1}}\| g\|_{\lp N}^{\frac1{p-1}} |A_{k}|^{\frac{1}{p-1}\left(\frac{p}{p_{*}}-\frac{1}{N}\right)}.
\ee
Since
\[
\frac{1}{p-1}\left(\frac{p}{p_{*}}-\frac{1}{N}\right)
=
\frac{1}{p-1}\left(p-1+\frac{p}{N}-\frac1N\right)
=
1+\frac1N,
\]
it follows from Lemma~\ref{lemA} that $u_{n}\in L^\infty(\Omega)$. 

Taking $-u_{n}^{-}$ as a test function in the weak formulation of \eqref{app1}, we obtain
\[
\io |Du_{n}^{-}|^{p}\le 0.
\]
Hence, $u_{n}$ is nonnegative.

We now claim that $(u_n)$ is uniformly bounded in $\huz$. Indeed, taking $\vp =T_1 (u_n)$ as a test function, we obtain
\[
\io |D\un|^{p-2} \D u_n\cdot \D T_1 (u_n)
+\io h_n(u_n)|\D u_n|^p\, T_1 (u_n)
=
\io T_1 (u_n) g_n .
\]

Since $g_n\leq g$, and by definition $T_1(u_{n}) =1$ and $h_n (u_n)=n$ on the set $\{u_{n} \geq 1\}$, it follows that
\[
\int_{\{u_n\leq 1\}}| \D u_n |^p
+
n \int_{\{u_n\geq 1\}} |\D u_n|^p
\leq
\|g\|_{\lp1}.
\]
In particular,
\be\label{pbound}
 \norma{\un}{\huz}\le \norma{g}{\lp1}^{1/p}.
\ee

Therefore, there exists $u\in\huz$ such that, up to a subsequence, $u_n$ converges weakly to $u$ in $\huz$.
\medskip

\begin{prop}\label{p1}
The convergence $\un\to u$ is strong in $\huz$. Moreover, $0\le \un\le 1$ a.e. in $\Omega$.
\end{prop}
\begin{proof}

Let $\eps>0$, $0\leq k<1$, and choose $\vp = {\frac{1}{\eps}}T_\eps (G_k (u_n))$ as test function. We get
$$
\begin{array}{l}
\dys
{\frac{1}{\eps}} \io |\D u_n|^{p-2} \D u_n\cdot \D T_\eps (G_k (u_n) )
\\
\dys
\quad
+ {\frac{1}{\eps}}\io h_n(u_n)|\D u_n|^p T_\eps (G_k (u_n))
= \io {\frac{1}{\eps}}T_\eps (G_k (u_n)) g_n\,.
\end{array}
$$
Notice that $T_{\eps}(G_{k}(u_{n})) = \eps$ when $u_{n} \geq k+\eps$, thus
\be\label{uke}
\dys \int_{\{ u_n \geq k+\eps\} } h_n(u_n)|\D u_n|^p \leq \int_{\{u_n \geq k\}} g_{n} \leq \int_{\{u_n \geq k\}} g \,.
\ee
By taking the limit as $\eps$ tends to $0^{+}$ and choosing $k=0$, we deduce that
\be\label{hndn}
\io h_n(u_n)|\D u_n|^p \leq \norma{g}{\lp1}\,,
\ee
Since
$$
-\dive(|D\un|^{p-2}\D u_{n}) \,,
$$
is bounded in $\lp1$, it follows from \cite{BM} that (up to subsequences) $\D u_{n}$ converges to $\D u$ almost everywhere in $\Omega$.

\medskip

Now, since $h_{n}$ is increasing, we deduce from \eqref{uke} that 
\be\label{1}
\dys \int_{\{u_n \geq k\} }|\D u_n|^p \leq \frac{1}{h_n(k)}\,\io g\,.
\ee
Setting  $k=1$ we obtain
\be\label{verysmall}
\dys \int_{\{ u_n \geq 1\} }|\D u_n|^p \leq \frac{1}{ n}\, \io g\,. 
\ee
Letting $n$ tend to infinity, and using Fatou lemma together with the almost everywhere convergence of $\D u_{n}$, we have
$$
\int_{\{ u > 1\} }|\D u|^p = 0,
$$
which implies $0 \leq u \leq 1$ almost everywhere in $\Omega$.

Now we prove that $\un\to u $ strongly in $\huz$. Since \[\un=T_{k}(\un)+G_{k}(\un),\] it suffices to show that $T_{k}(\un)$ and $G_{k}(\un)$ converge strongly to $T_{k}(u)$ and  $G_{k}(u)$ respectively. 

We first prove  $T_{k}(\un)\to T_{k}(u)$ strongly in $\huz$.

Fix $0<k<1$, $\eta>0$ and let 
\[
h(s)=\frac{1}{(1-s)^{\gamma}} 
\]

Choose  $\vp = \vp_{ \eta } (T_k (u_n)-T_k (u))$ as test function to obtain

\be\label{trunc1}
\begin{array}{l}
\dys
C_{p}\io |DT_k (u_n)-DT_k (u)|^p \vp_{\eta}'\le\io( |D\un|^{p-2} D u _n -|D u|^{p-2} D u)\cdot \D (T_k (u_n)-T_k (u)) \vp_{\eta}'\\
\dys
\quad
=- \io h_n(u_n)|\D u_n|^p \vp_{\eta} -\io |D u|^{p-2} D u\cdot \D (T_k (u_n)-T_k (u)) \vp_{\eta}+ \io\vp_{\eta} g_{n}\,.
\end{array}
\ee

Notice that $h_n(u_n)\le h_{n}(k)\le h(k)$ on the set ${\{u_n\leq k\}}$. Hence
\be\label{trunc2}
\begin{array}{l}
\dys
\int_{\{u_n\leq k\}} h_n(u_n)|\D u_n|^{p-2}D\un\cdot D\un \vp_{\eta} + \int_{\{u_n\leq k\}} |D u|^{p-2} D u\cdot \D (T_k (u_n)-T_k (u)) \vp_{\eta}\\
\dys
\quad
\ge -h(k) \int_{\{u_n\leq k\}}( |D\un|^{p-2} D u _n -|D u|^{p-2} D u)\cdot \D (T_k (u_n)-T_k (u)) \vp_{\eta} + o(1)\\
\ge -C_{p}h(k) \int_{\{u_n\leq k\}} |DT_k (u_n)-DT_k (u)|^p \vp_{\eta} + o(1)
\end{array}
\ee

Since $ \io\vp_{\eta} g_{n}=o(1)$ as $n\to \infty$, we can combine \eqref{trunc1} and \eqref{trunc2} to obtain:
\[
\io |DT_k (u_n)-DT_k (u)|^p (\vp_{\eta}'-h(k) \vp_{\eta})\le o(1)
\]
The function $\vp_{\eta}$ has the following property:
\be\label{vpla}
\vp_{\eta}'(s) - h (k) \vp_{\eta}(s) \geq 1\,,
\quad
\text{ if }\eta >\frac{ h^2(k)}{8}\,,
\ 
\forall s \in \re
\,.
\ee
Hence, we fix $\eta >\frac{ h^2(k)}{4}$, we obtain 
\[
\io |DT_k (u_n)-DT_k (u)|^p \le o(1)
\]
as desired. Therefore, $T_{k}(\un)\to T_{k}(u)$ strongly in $\huz$.

Finally, to finish the proof we show that $G_{k}(\un)\to G_{k}(u)$ strongly. Recall, from \eqref{1} we have
\be\label{1}
\dys \int_{E} |\D G_{k}(u_n)|^p \leq \frac{1}{h_n(k)}\,\io g\,.
\ee
for any measurable set $E\subseteq \Omega$. So if we choose $n$ large enough $\frac{1}{h_n(k)}<C$, and it follows that the family $\{ D G_{k}(u_n) \}$ is equiintegrable. By Vitali's convergence theorem, $G_{k}(\un)\to G_{k}(u)$ strongly in $\huz$.
\end{proof}
\begin{prop}\label{p2}
The function $u$ satisfies $0\le u<1$ almost everywhere. Moreover, $h_{n}(\un)|D\un|^{p}\to \frac{|Du|^{p}}{(1-u)^{\gamma}}$ strongly in $\lp1$.
\end{prop}
\begin{proof}
Since we already proved that $0\le u\le 1$, it suffices to show that the set $\{u=1\}$ has measure zero. Set 
\[\Phi_n(s)=\int_0^s \sqrt[p]{h_n(t) } dt .\]

From the identity \eqref{hndn} we obtain
$$
\io |\D \Phi_n (u_n) |^p
\leq
\|g\|_{\lp1}\,.
$$
By Poincar\'e inequality we then deduce, for every $\tau > 0$, there exists $C > 0$,
$$
C \int_{\{1-\tau \leq u_n\leq 1+\tau \} } | \Phi_{n} (u_n) |^p
\leq
C \io | \Phi_{n} (u_n) |^p
\leq \|g\|_{\lp1}\,.
$$
Now, since $\Phi_{n}$ is increasing, we deduce that
$$
\mis (\{1-\tau \leq u_n\leq 1+\tau \}) \leq
\frac{\|g\|_{\lp1}}{C| \Phi_n (1-\tau) |^p}
\,.
$$
If $n$ is large enough, then $h_{n}(s) = h(s)$ on $[0,1-\tau]$, so that
\be\label{phip}
\mis (\{1-\tau \leq u_n\leq 1+\tau  \}) \leq
\frac{\|g\|_{\lp1}}{C| \Phi_{p} (1-\tau) |^p}
\,,
\ee
where $\Phi_{p}(t) = \int_{0}^{t}\,\sqrt[p]{\frac{1}{(1-s)^{\gamma}}}\,ds$. Since $\Phi_{p}$ is unbounded on $[0,1)$ because $\gamma\ge p$, it follows that $\{u=1\}$ has measure zero.

We now prove the strong convergence of $h_{n}(\un)|D\un|^{p}$. First notice that since the map
\[
(s,z)\mapsto \frac{|z|^{p}}{(1-s)^{\gamma}}
\]
is continuous, we can easily see that $h_{n}(\un)|D\un|^{p}\to \frac{|Du|^{p}}{(1-u)^{\gamma}}$ almost everywhere. We claim $\{h_{n}(\un)|D\un|^{p}\}$ is equiintegrable.

Let $E\subset \Omega$ be a measurable set and for every $0<k<1$ we have, since $h_{n}$ is increasing, and $h_{n}(s) \leq h(s)$ if $s > \frac 1n$, 
$$
\begin{array}{r@{\hspace{2pt}}c@{\hspace{2pt}}l}
\dys 
\int_E 
h_n (u_n)|\D u_n|^p 
& = &
\dys
\int_{E\cap\{u_n \leq k\}} h_n (u_n)|\D u_n|^p +
\int_{E\cap\{u_n \geq k\}} h_n (u_n)|\D u_n|^p
\\
& \leq &
\dys 
h(k)\int_{E} |\D T_k (u_n) |^p
+ 
\int_{\{u_n \geq k\}} h_n (u_n)|\D u_n|^p \,.
\end{array}
$$
Using \rife{hndn}, we have that 
$$
\int_{\{u_n \geq k\}} h_n (u_n)|\D u_n|^p
\leq
\int_{\{k\leq u_n \leq 1 \}} g
+ \int_{\{ u_n > 1 \}} g\,,
$$
and the last integral goes to zero since $0\le u <1$ almost everywhere in $\Omega$.

Let now $\eps>0$; since $g\in\lp1$, there exists $\de_{\eps} > 0$ such that
$$
\mis(E) < \de_{\eps} \ \Rightarrow\ \int_{E}\,g < \frac{\eps}{2}\,.
$$
Choose $k_{\eps}<1$ such that 
$$
\int_{\{k_{\eps} \leq u_{n} \leq 1\}}\,g < \frac{\eps}{2}\,,
\quad
\forall n \geq n_{0}\,.
$$
Once $k_{\eps} < 1$ is fixed, we already know that $T_{k_{\eps}} (u_n)\to T_{k_{\eps}} u$ strongly in $\huz$; therefore, we can choose $\mis(E)$ small enough so that 
$$
h (k_{\eps})\int_{E} |\D T_{k_{\eps}} (u_n) |^p \leq \frac{\eps}{2}\,,
$$
uniformly with respect to $n$. The conclusion follows by Vitali convergence theorem.

\end{proof} 

We can now finish the proof of Theorem \ref{tp1}. By propositions \ref{p1} and \ref{p2}, we can pass the limit on \eqref{app1}. Thus, $u$ constructed above is a solution.
\end{proof}

Henceforth, the solution constructed above will be denoted by $u_{p}$ in order to emphasize its dependence on $p$.

{\bf Step 2. The limit solution when $p\to 1$} 
\medskip 

Recall that, for $p>1$, estimate \eqref{pbound} yields
\be\label{unip}
 \norma{u_{p}}{\huz}\le \norma{g}{\lp1}^{\frac1p}.
\ee
Hence, the family $\{u_{p}\}$ is uniformly bounded in $W_0^{1,p}(\Omega)$. In particular, by the compactness properties of $BV$ spaces, there exist a function $u\in BV(\Omega)$ and a subsequence, still denoted by $\{u_p\}$, such that
\[
u_{p}\to u \quad \text{in }L^1(\Omega)\text{ and a.e. in }\Omega,
\]
as $p\to 1^{+}$.

Also, from \eqref{gkb} and the assumption $\norma{g}{\lp N}\le 1$, we deduce that
\be\label{cx1}
\io|G_{k}(u_{p})|\le S_{N,p}^{\frac{1}{p-1}} |A_{k}|^{1+\frac{1}{N}}.
\ee
Using the explicit expression for $S_{N,p}$ (see, for instance, \cite{talenti}), we have
\[
S_{N,p}\to \frac{1}{N \omega_{N}^{\frac{1}{N}}}<1
\qquad \text{as } p\to 1^{+}.
\]
Therefore, there exists $p_{0}>1$ such that
\[
S_{N,p}<1
\qquad \forall\, 1<p<p_{0}.
\]
Since
\[
S_{N,p}^{\frac{1}{p-1}}\to 0
\qquad \text{as } p\to1^{+},
\]
it follows that, possibly reducing $p_{0}$,
\[
S_{N,p}^{\frac{1}{p-1}}<1
\qquad \forall\, 1<p<p_{0}.
\]
Hence, by Lemma~\ref{lemA},
\[
\norma{u_{p}}{\lp\infty}\le C,
\]
where the constant $C>0$ is independent of $p$. Since $u_{p}\to u$ almost everywhere, we conclude that $u\in\lp\infty$.

Finally, since $0\le u_{p}<1$ almost everywhere in $\Omega$, we have
\[
0\le u\le 1
\qquad \text{a.e. in }\Omega.
\]
Moreover, by \eqref{phip} and the assumption $\gamma>1$, we obtain
\[
u<1
\qquad \text{a.e. in }\Omega.
\]

{\bf Step 3. Existence of a vector field $\z\in\dm$ satisfying $\norma{\z}{\infty}\le 1$.}
\medskip 

We now prove the existence of a vector field satisfying Definition~\ref{def}. The idea is to extract a convergent subsequence from the family $\{|Du_{p}|^{p-2}Du_{p}\}_{p>1}$. To achieve this, we use Vitali's theorem once again.

First, notice that $\{|Du_{p}|^{p-2}Du_{p}\}_{p>1}$ is equiintegrable. Indeed, for any measurable set $E\subset \Omega$, by using \eqref{unip} we obtain
\be\label{equii1}
\left|\int_{E} |Du_{p}|^{p-2}Du_{p} \right|\le \int_{E} |Du_{p}|^{p-1}\le \left(\int_{E} |Du_{p}|^{p}\right)^{\frac{p-1}{p}}|E|^{\frac1p}\le C |E|^{\frac1p}
\ee
The fact that $\{|Du_{p}|^{p-2}Du_{p}\}_{p>1}$  is uniformly bounded is again a direct consequence of \eqref{unip}. Therefore, by Vitali's theorem, there exists $\z\in L^{1}(\Omega,\rn)$ such that, up to ``subsequence'',
\be
|Du_{p}|^{p-2}Du_{p}\wc \z\text{ as }p\to 1^{+}
\ee
Moreover, by the lower semi-continuity of the total variation combined with Young's inequality we obtain for a given $0\le \vp\in C_{0}^{\infty}(\Omega)$:
\be
\begin{split}
\io \vp \frac{|Du|}{(1-u)^{\gamma}}\le \liminf \io\vp \frac{|Du_{p}|}{(1-u_{p})^{\gamma}}&\le \liminf \left(\frac{1}{p} \io\vp \frac{|Du_{p}|^{p}}{(1-u_{p})^{\gamma }}+\frac{1}{p'} \io\frac{\vp }{(1-u_{p})^{\gamma}}\right)\\
&\le \liminf \io\vp\frac{ |Du_{p}|^{p}}{(1-u_{p})^{\gamma}}
\end{split}
\ee
Since $u_{p}$ satisfies
\be
\io |Du_{p}|^{p-2}Du_{p}D\vp + \io\vp\frac{ |Du_{p}|^{p}}{(1-u_{p})^{\gamma}} =\io g\vp,
\ee
we conclude that if we take the limit $p\to 1^{+}$ we obtain
\be\label{zvp}
\io \z D\vp + \io\vp\frac{ |Du|}{(1-u)^{\gamma}} \le \io g\vp.
\ee
Hence, $\Div(\z)$ is a Radon measure in $\Omega$. Additionally, we can argue similar to equation \eqref{equii1}, and obtain for $1\le q \le p'$:
\be
\left(\io ||Du_{p}|^{p-2}Du_{p}|^{q} \right)^{\frac1q}\le\left(\io |Du_{p}|^{p}\right)^{\frac{p-1}{p}}|\Omega|^{\frac1q-\frac{p-1}{p}}\le C |\Omega|^{\frac1q-\frac{p-1}{p}}.
\ee
It follows that $\norma{z}{\lp q}\le |\Omega|^{\frac1q}$, and letting $q\to \infty$ we deduce that $\norma{z}{\lp \infty}\le 1$.

We claim that $\Div(\z)$ has bounded total variation. Indeed, recall that $\{-\Div(|Du_{p}|^{p-2}Du_{p})\}_{p>1}$ is uniformly bounded in $\lp 1$. Therefore, up to subsequence, it converges weakly-* to some measure, which must be $-\Div(\z)$ by construction. So $-\Div(\z)$ is a Radon measure with finite total variation, thus $\z\in\dm$.

{\bf Step 4. The equation $-\Div(\z) + \frac{|D u|}{(1-u)^{\gamma}} = g$ holds in $\dcal$, and $\z$ satisfies $(\z,Du)=|Du|$.}
\medskip 

Let $0\le \vp\in C_{0}^{\infty}(\Omega)$, and take $e^{-\Phi_{p}(u_{p})}\vp$ as a test function in the weak formulation of problem \eqref{pmain2}. We obtain
\be
\io e^{-\Phi_{p}(u_{p})} |Du_{p}|^{p-2}Du_{p}\cdot D\vp  =\io g\, e^{-\Phi_{p}(u_{p})}\vp.
\ee
Passing to the limit as $p\to 1^{+}$, we obtain
\be
\io e^{-\Phi(u)}\z\cdot D\vp  =\io g\, e^{-\Phi(u)} \vp,
\ee
where
\[
\Phi(s)=\int_{0}^{s}\frac{1}{(1-t)^{\gamma}}\, dt.
\]
Equivalently,
\be
-\Div(e^{-\Phi(u)}\z) = e^{-\Phi(u)}g
\qquad \text{in }\mathcal D'(\Omega).
\ee

Furthermore, by \eqref{zvp} and the fact that $\norma{\z}{\infty}\le 1$, we have
\be
\begin{split}
-\Div(e^{-\Phi(u)}\z)
&=
-e^{-\Phi(u)}\Div(\z)-(\z,D(e^{-\Phi(u)}))\\
&\le
e^{-\Phi(u)}g-e^{-\Phi(u)}|D\Phi(u)|-(\z,D(e^{-\Phi(u)}))\\
&\le
e^{-\Phi(u)}g-e^{-\Phi(u)}|D\Phi(u)|+|D(e^{-\Phi(u)})|\\
&=
e^{-\Phi(u)}g.
\end{split}
\ee
Hence, all the inequalities above are equalities. In particular,
\be\label{eqz}
-\Div(\z)+|D\Phi(u)|=g
\qquad \text{as measures}.
\ee
Moreover, equality in the second inequality yields
\be
-(\z,D(e^{-\Phi(u)}))=|D(e^{-\Phi(u)})|.
\ee

Now let $0\le \vp\in C_{0}^{\infty}(\Omega)$, and take $u_{p}\vp$ as a test function in the weak formulation of problem \eqref{pmain2}. We obtain
\be
\io u_{p} |Du_{p}|^{p-2}Du_{p}\cdot D\vp
+\io \vp |Du_{p}|^{p}
+\io \frac{u_{p}\vp|Du_{p}|^{p}}{(1-u_{p})^{\gamma}}
=
\io g u_{p}\vp.
\ee
Passing to the limit as $p\to 1^{+}$ and using the lower semicontinuity of the $BV$ norm, we deduce
\be
\io u \z\cdot D\vp
+\io \vp |Du|
+\io \frac{u\vp|Du|}{(1-u)^{\gamma}}
\le
\io g u\vp.
\ee
Finally, using \eqref{eqz}, we obtain
\be
\io u \z\cdot D\vp
+\io \vp |Du|
+\io \frac{u\vp|Du|}{(1-u)^{\gamma}}
\le
\io \frac{u\vp|Du|}{(1-u)^{\gamma}}
-\io u\vp \Div(\z).
\ee
Therefore,
\be
\io \vp |Du|
\le
\langle (\z,Du),\vp\rangle.
\ee
Since $\norma{\z}{\infty}\le 1$, it follows that
\be
(\z,Du)=|Du|
\qquad \text{as measures}.
\ee

\begin{prop}
The jump part of $Du$ vanishes, i.e. $D^{j}u=0$.
\end{prop}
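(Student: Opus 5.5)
The idea is to compare the equation \eqref{eqz} with its reformulation for $e^{-\Phi(u)}$ on the jump set $J_u$, where the chain rule is not an identity but a strict inequality. Consider a jump point $x\in J_u$ with one-sided values $u^{-}<u^{+}$ (both in $[0,1)$, since $u\in\lp\infty$ and $0\le u<1$ a.e.). Recall that in BV the measure $|D\Phi(u)|$ in \eqref{eqz} must be read correctly on $J_u$. Its jump part is
\[
\big(\Phi(u^{+})-\Phi(u^{-})\big)\,\mathcal H^{N-1}\llcorner J_u .
\]
On the other hand, the term $\frac{|Du|}{(1-u)^{\gamma}}$ produced by lower semicontinuity carries the precise representative, i.e. it equals
\[
\frac{u^{+}-u^{-}}{(1-u^{*})^{\gamma}}\,\mathcal H^{N-1}\llcorner J_u ,\qquad u^{*}=\tfrac{u^{+}+u^{-}}{2}.
\]

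The plan is to use the two identities already derived in Step 4:
\[
-\Div(e^{-\Phi(u)}\z)=e^{-\Phi(u)}g \quad\text{in }\dcal,\qquad -\Div(\z)+|D\Phi(u)|=g .
\]

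\textbf{Step 1: a lower bound from the product rule.} Write $v=e^{-\Phi(u)}\in BV\cap L^\infty$. The first identity shows that $\Div(v\z)$ is absolutely continuous with respect to $\mathcal L^N$, so it puts no mass on $J_u$. By Anzellotti's product rule,
\[
\Div(v\z)=v^{*}\Div(\z)+(\z,Dv).
\]
Restricting to $J_u$ gives
\[
0=v^{*}\,\Div(\z)\llcorner J_u+(\z,Dv)\llcorner J_u .
\]
From \eqref{eqz}, $\Div(\z)\llcorner J_u=|D\Phi(u)|\llcorner J_u$. We also have $-(\z,Dv)=|Dv|$, which was established in Step 4. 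Together these give, on $J_u$,
\[
\frac{e^{-\Phi(u^{+})}+e^{-\Phi(u^{-})}}{2}\,\big(\Phi(u^{+})-\Phi(u^{-})\big)=e^{-\Phi(u^{-})}-e^{-\Phi(u^{+})}.
\]
Set $a=\Phi(u^{-})<b=\Phi(u^{+})$. The relation becomes
\[
\tfrac{e^{-a}+e^{-b}}{2}(b-a)=e^{-a}-e^{-b}, \qquad\text{i.e.}\qquad \tanh\!\big(\tfrac{b-a}{2}\big)=\tfrac{b-a}{2}.
\]
This forces $b=a$, a contradiction with $u^{-}<u^{+}$. Hence $\mathcal H^{N-1}(J_u)=0$, and so $D^{j}u=0$.

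\textbf{Main obstacle.} The hard part is justifying the precise form of the jump part of $|D\Phi(u)|$ in \eqref{eqz}, since that equation was obtained via inequalities. What the argument really needs is an identification of $\Div(\z)\llcorner J_u$ that differs from $\mathcal H^{N-1}(e^{-a}-e^{-b})/v^{*}$. First I would redo the chain of inequalities preceding \eqref{eqz} restricted to $J_u$. Only the inequality $\Div(\z)\ge|D\Phi(u)|$ from \eqref{zvp} (in whichever representative form lower semicontinuity produces) is used, together with $|(\z,Dv)|\le|Dv|$. This yields
\[
v^{*}\,\Div(\z)\ge v^{*}|D\Phi(u)|^{j}\ \text{on } J_u \qquad\text{and}\qquad v^{*}(b-a)\le e^{-a}-e^{-b}.
\]
The strict inequality $\tanh t<t$ for $t>0$ then contradicts this directly, without needing the exact equality. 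If the lower-semicontinuous term instead comes with the weight $(1-u^{*})^{-\gamma}$, the same convexity comparison applies to the corresponding jump quantity $\frac{u^{+}-u^{-}}{(1-u^{*})^{\gamma}}$. The comparison uses that $e^{-\Phi}$ is strictly convex in the variable $\Phi$ and that $1/(1-s)^{\gamma}$ is increasing. Checking which representative appears is the step I expect to require care.
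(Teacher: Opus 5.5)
Your argument is correct in its inequality form, and it takes a genuinely different route from the paper's.

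\textbf{The paper's route.} The paper covers $J_u$ by Lipschitz hypersurfaces $\Gamma_k$. It applies Green's formula to $u\z$ on thin cylinders shrinking to $U\cap\Gamma_k$. It then uses $(\z,Du)=|Du|$ and $|[\z,\nu]|\le 1$ to reach $\int_{U\cap\Gamma_k}\frac{u^*}{(1-u^*)^{\gamma}}\,d|D^ju|\le 0$. In effect it tests the equation with $u$ near the jump set and concludes from the positivity of the weight at jump points.

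\textbf{Your route.} You use neither $(\z,Du)=|Du|$ nor the hypersurface covering with its limits of boundary integrals. That limit is a delicate step, since the one-sided normal traces of $\z$ on $\Gamma_k$ need not agree a priori. You only need four ingredients:
\begin{itemize}
\item $\Div(v\z)=-vg$ does not charge $J_u$;
\item the product rule $\Div(v\z)=v^*\Div(\z)+(\z,Dv)$, which is just the definition of the pairing with $v^*$;
\item the bound $\Div(\z)\llcorner J_u\ge\mu\llcorner J_u$ from \eqref{zvp}, where $\mu$ is the absorption measure;
\item the strict failure of the chain rule on jumps, $|Dv|^j<v^*|D\Phi(u)|^j$, which is exactly $\tanh t<t$.
\end{itemize}
Your computation shows that the chain of inequalities in Step 4 is what forces $\mathcal H^{N-1}(J_u)=0$, once it is written with $v^*$. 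Its last step, $|D(e^{-\Phi(u)})|=e^{-\Phi(u)}|D\Phi(u)|$, fails precisely on $J_u$. For the same reason you should present only the inequality version. The equalities you feed into your Step 1, namely \eqref{eqz} and $-(\z,Dv)=|Dv|$, were derived from that chain and so cannot be taken as input on $J_u$.

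\textbf{What your route requires.} Your argument is sharp: it needs the jump density of $\mu$ to be at least $\Phi(u^+)-\Phi(u^-)$.
\begin{itemize}
\item Lower semicontinuity of the total variation of $\Phi(u_p)$, or of $T_k\Phi(u_p)$, gives exactly $\mu\ge|D\Phi(u)|$, so your main line is fine.
\item The truncated version also handles jump points with $u^+=1$. Your justification ``$u^\pm\in[0,1)$ since $0\le u<1$ a.e.'' does not exclude these.
\item Your fallback claim is false: the comparison does not survive with the smaller density $\frac{u^+-u^-}{(1-u^*)^{\gamma}}$. For $\gamma=2$, $u^-=0$, $u^+=0.1$ one gets $v^*\frac{u^+-u^-}{(1-u^*)^{2}}\approx 0.1050<0.1052\approx e^{-a}-e^{-b}$, so there is no contradiction.
\end{itemize}
You should therefore commit to the $|D\Phi(u)|$ reading. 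The paper's mechanism, by contrast, works with any positive jump density, because it only uses that the absorption term is positive on $J_u$.
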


\begin{proof}
Let $\Gamma_{k}$ be a sequence of Lipschitz hypersurfaces such that
\be
\mathcal{H}^{N-1}\left(S_{u}\setminus \bigcup_{k=1}^{\infty} \Gamma_{k}\right)=0.
\ee
To prove the proposition, it suffices to show that
\be
|D^{j}u|(\Gamma_{k})=0
\qquad \forall k\in\na.
\ee
Equivalently, we show that for every fixed $k\in \na$ and every $a\in \Gamma_{k}$, there exists an open set $U\ni a$ such that
\[
|D^{j}u|(\Gamma_{k}\cap U)=0.
\]

Take $U$ to be a smooth neighborhood of $a$ satisfying
\[
\overline{U\cap \Gamma_{k}}\subset \Omega,
\]
choose $n_{0}$ such that
\[
\frac{1}{n_{0}}<d(\overline{U\cap \Gamma_{k}},\partial\Omega),
\]
and define, for each $n\ge n_{0}$,
\[
U_{n}:=\left\{x+t\nu(x)\,:\, x\in U\cap \Gamma_{k},\ |t|<\frac1n\right\},
\]
where $\nu(x)$ denotes the unit normal vector to $\Gamma_k$ at $x$. Notice that
\[
U_{n+1}\subset U_{n},
\qquad
\bigcap_{n\ge n_{0}} U_{n}=U\cap \Gamma_{k}.
\]

Since
\[
-\Div(\z)+|D\Phi(u)|=g
\qquad \text{as measures},
\]
we obtain
\be
-\int_{U_{n}} u^* \,\Div(\z)+\int_{U_{n}} u^* |D\Phi(u)|=\int_{U_{n}} u g.
\ee
Using Green's formula, we deduce
\be\label{gf1}
\int_{U_{n}}  (\z,Du)
-\int_{\partial U_{n}}u[\z,\nu]\,d\mathcal{H}^{N-1}
+\int_{U_{n}} u^* |D\Phi(u)|
=
\int_{U_{n}} u g.
\ee

Consider the decomposition
\be
\partial U_{n} = \partial U_{n}^{+}\cup \partial U_{n}^{-}\cup  \partial U_{n}^{0},
\ee
where
\be
\begin{split}
\partial U_{n}^{+} &= \left\{x+\frac1n\nu(x)\,:\, x\in U\cap \Gamma_{k}\right\},\\
\partial U_{n}^{-} &= \left\{x-\frac1n\nu(x)\,:\, x\in U\cap \Gamma_{k}\right\},\\
\partial U_{n}^{0} &= \partial U_{n}\setminus \left(\partial U_{n}^{+} \cup \partial U_{n}^{-}\right).
\end{split}
\ee

Notice that $\partial U_{n}^{0}$ is the lateral boundary of the cylinder $U_{n}$. Hence,
\be
\bigcap_{n\ge n_{0}} \partial U_{n}^{0} = \partial (U\cap \Gamma_{k}),
\ee
and
\be \label{up1}
\lim_{n\to\infty} \int_{\partial U_{n}^{0}}u[\z,\nu]\,d\mathcal{H}^{N-1}=0,
\ee
since
\[
\mathcal{H}^{N-1}(\partial (U\cap \Gamma_{k}))=0.
\]

Moreover,
\be \label{up2}
\begin{split}
\lim_{n\to\infty} \int_{\partial U_{n}^{+}}u[\z,\nu]\,d\mathcal{H}^{N-1}
&=
\int_{U\cap \Gamma_{k}}u_{\Gamma_{k}}^{+}[\z,\nu]\,d\mathcal{H}^{N-1},\\
\lim_{n\to\infty} \int_{\partial U_{n}^{-}}u[\z,\nu]\,d\mathcal{H}^{N-1}
&=
\int_{U\cap \Gamma_{k}}u_{\Gamma_{k}}^{-}[\z,-\nu]\,d\mathcal{H}^{N-1}.
\end{split}
\ee

Combining \eqref{up1} and \eqref{up2}, we obtain
\be
\lim_{n\to\infty} \int_{\partial U_{n}}u[\z,\nu]\,d\mathcal{H}^{N-1}
=
\int_{U\cap \Gamma_{k}}(u_{\Gamma_{k}}^{+}-u_{\Gamma_{k}}^{-})[\z,\nu]\,d\mathcal{H}^{N-1}.
\ee

Passing to the limit in \eqref{gf1}, we obtain
\be \label{up3}
\int_{U\cap \Gamma_{k}} (\z,D^{j}u)
-\int_{U\cap \Gamma_{k}}(u_{\Gamma_{k}}^{+}-u_{\Gamma_{k}}^{-})[\z,\nu]\,d\mathcal{H}^{N-1}
+\int_{U\cap \Gamma_{k}} \frac{u^*|D^{j}u|}{(1-u^*)^{\gamma}}
=0.
\ee

Now observe that
\be
\left|
\int_{U\cap \Gamma_{k}}
(u_{\Gamma_{k}}^{+}-u_{\Gamma_{k}}^{-})[\z,\nu]
\,d\mathcal{H}^{N-1}
\right|
\le
\int_{U\cap \Gamma_{k}}
|u_{\Gamma_{k}}^{+}-u_{\Gamma_{k}}^{-}|
\,d\mathcal{H}^{N-1}
=
|D^{j}u|(U\cap \Gamma_{k}).
\ee

Since $(\z,Du)=|Du|$ as measures, we also have
\[
(\z,D^{j}u)=|D^{j}u|.
\]
Therefore, \eqref{up3} yields
\be
\int_{U\cap \Gamma_{k}}
\frac{u^*}{(1-u^*)^{\gamma}}
\,|D^{j}u|
\le 0.
\ee

Finally, on the jump set,
\[
u^*=\frac{u^++u^-}{2},
\]
and since $0\le u<1$ a.e., we have
\[
0\le u^*<1
\qquad |D^{j}u|\text{-a.e.}
\]
Hence,
\[
\frac{u^*}{(1-u^*)^\gamma}\ge0,
\]
which implies
\[
|D^{j}u|(U\cap \Gamma_k)=0.
\]
The conclusion follows.
\end{proof}
Finally, we prove that $u$ has zero trace.

\begin{prop}
$u=0$ on $\partial\Omega$ in the trace sense.
\end{prop}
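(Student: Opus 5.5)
To prove that $u$ has zero trace on $\partial\Omega$, I will follow the standard argument for $1$-Laplacian problems. One tests the $p$-problem with a function of $u_p$ whose gradient is not cut off at the boundary. In the limit, the boundary term $\int_{\partial\Omega} |u|$ appears through lower semicontinuity of the total variation on a larger domain. It is then compared with the Green's formula identity for $\z$, which produces the same term with $[\z,\nu]$ in place of $1$.

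Concretely, extend $u_p$ and $u$ by zero outside $\Omega$. For $0\le\vp\in C^1(\overline\Omega)$, take $e^{-\Phi_p(u_p)}\vp$, or more simply $T$-type functions of $u_p$ with $\vp$ not vanishing on the boundary, in the weak formulation. The $p$-solutions lie in $\huz$, so no boundary term appears and this is legitimate. The weighted identity used in Step~4 gives
\[
\io e^{-\Phi_p(u_p)}|Du_p|^{p-2}Du_p\cdot D\vp=\io g e^{-\Phi_p(u_p)}\vp .
\]
Choosing $\vp\equiv1$ gives $\io |D(1-e^{-\Phi_p(u_p)})|^{p}\cdot(\ldots)$. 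More cleanly, testing with $1-e^{-\Phi_p(u_p)}$ yields
\[
\io |D u_p|^p e^{-\Phi_p(u_p)}\Phi_p'(u_p)\,\ldots
\]
and Young's inequality bounds this as in Step~3 by $\io |D(1-e^{-\Phi_p(u_p)})|$. Passing to the limit with lower semicontinuity of total variation on $\re^N$ (not on $\Omega$) for the zero extension gives
\[
\io |D w| + \int_{\partial\Omega} |w|\,d\mathcal H^{N-1}\le \io g\,(1-e^{-\Phi(u)}) ,\qquad w:=1-e^{-\Phi(u)} .
\]

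Next I will compute the right-hand side using the limit equation $-\Div(e^{-\Phi(u)}\z)=e^{-\Phi(u)}g$ from Step~4, together with $-\Div\z+|D\Phi(u)|=g$. Writing $g\,w = g - g e^{-\Phi(u)}$ and applying Green's formula to $\io w\,\Div(\z)$, I obtain
\[
\io g\,w=\io (\z,Dw)-\int_{\partial\Omega} w[\z,\nu]\,d\mathcal H^{N-1}+(\text{terms that cancel}).
\]
The cancellation comes from $(\z,Dw)=|Dw|$, which follows from $-(\z,D(e^{-\Phi(u)}))=|D(e^{-\Phi(u)})|$ established in Step~4. Combining the two inequalities gives
\[
\int_{\partial\Omega}\big(|w|+w[\z,\nu]\big)\,d\mathcal H^{N-1}\le 0 .
\]
Since $\|[\z,\nu]\|_\infty\le1$ and $w\ge0$, the integrand is nonnegative, so it vanishes. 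This only yields $w(1+[\z,\nu])=0$, i.e. $w=0$ wherever $[\z,\nu]\neq-1$.

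The main obstacle is exactly this point: the standard $1$-Laplacian weak boundary condition is $[\z,\nu]\in\operatorname{sign}(-u)$, not $u=0$. To upgrade it to a genuine zero trace, I would exploit the smallness $\norma{g}{\lp N}\le1$ (with $S_{N,1}$-type constant), as in \cite{mazon}-style arguments. First, the uniform $L^\infty$ bound from Step~2 holds. Then I would test with $G_k(u_p)$ up to the boundary and use the Sobolev inequality on $\re^N$ for the zero extension:
\[
\|G_k(u)\|_{L^{1^*}}\le S_{N,1}\Big(|DG_k(u)|(\Omega)+\int_{\partial\Omega}G_k(u)\Big)\le S_{N,1}\|g\|_{N}\|G_k(u)\|_{L^{1^*}} .
\]
When $S_{N,1}\|g\|_N<1$, or in the equality case via the strictly positive absorption term $\io G_k(u)|D\Phi(u)|$, this forces the boundary integral to vanish. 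Letting $k\to0$ gives $u=0$ on $\partial\Omega$. The delicate part is keeping the boundary term through lower semicontinuity, and I expect the borderline case $\|g\|_N=1$ to need the singular absorption term to be strictly positive unless $u\equiv0$.
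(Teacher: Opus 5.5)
Your closing Sobolev argument is a valid proof, by a genuinely different route from the paper's. The first half of your proposal, however, does not work and should be dropped.

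\textbf{Problems in the first half.} Testing with $e^{-\Phi(u_p)}\varphi$ for $\varphi\neq0$ on $\partial\Omega$ is not admissible, since the test function must lie in $W^{1,p}_0(\Omega)$. More importantly, the ``terms that cancel'' do not cancel. Multiplying $-\Div\z+|D\Phi(u)|=g$ by $w^*$ leaves $\io w^*|D\Phi(u)|\ge0$ on the side of $\io g\,w$. That term can only be compensated if you keep, rather than discard, the absorption term $\io w_p\frac{|Du_p|^p}{(1-u_p)^\gamma}$ at the $p$-level.

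Keeping it is exactly the paper's idea, and it is why your diagnosis of the ``main obstacle'' is off here. Because of the gradient absorption, one gets more than $[\z,\nu]\in\mathrm{sign}(-u)$. The paper tests with $u_p$ and keeps everything. The absorption term is essentially the total variation of $\Psi(u_p)$, where $\Psi(s)=\int_0^s t(1-t)^{-\gamma}\,dt\ge s^2/2$. Lower semicontinuity on $\re^N$ therefore produces, besides $\io|Du|+\int_{\partial\Omega}|u|\,d\mathcal H^{N-1}$, the extra boundary term $\int_{\partial\Omega}\Psi(u)\,d\mathcal H^{N-1}\ge\frac12\int_{\partial\Omega}u^2\,d\mathcal H^{N-1}$. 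Green's formula, $(\z,Du)=|Du|$ and $|[\z,\nu]|\le1$ then leave $\frac12\int_{\partial\Omega}u^2\,d\mathcal H^{N-1}\le0$.

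\textbf{The Sobolev argument.} This part never uses $\z$. You test with $G_k(u_p)$, drop the absorption term, apply Young's inequality, use lower semicontinuity for the zero extension, and use the uniform $L^\infty$ bound of Step~2 to pass to the limit in $\io g\,G_k(u_p)$. This gives
\[
|D\overline{G_k(u)}|(\re^N)\le\io g\,G_k(u)\le S_{N,1}\|g\|_{L^N(\Omega)}\,|D\overline{G_k(u)}|(\re^N),
\]
which is correct. Two corrections are needed.
\begin{enumerate}
\item The borderline case you worry about never arises. Since $S_{N,1}=1/(N\omega_N^{1/N})<1$, as the paper itself notes, the hypothesis $\|g\|_{L^N(\Omega)}\le1$ already gives $S_{N,1}\|g\|_{L^N(\Omega)}<1$. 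You conflated $\|g\|_{L^N}=1$ with $S_{N,1}\|g\|_{L^N}=1$.
\item The inequality gives much more than a vanishing boundary integral. It forces $|D\overline{G_k(u)}|(\re^N)=0$, hence $G_k(u)\equiv0$. Taking $k=0$ directly gives $u\equiv0$ in $\Omega$, so no limit $k\to0$ is needed and the trace statement is immediate.
\end{enumerate}
This is consistent with Step~2: Lemma~\ref{lemA} applied to \eqref{cx1} actually yields $\|u_p\|_{L^\infty(\Omega)}\le C\,S_{N,p}^{1/(p-1)}\to0$.

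\textbf{Comparison.} Your route is more elementary. It shows that, under the hypotheses of Theorems~\ref{t1} and~\ref{t2}, the limit solution is the trivial one. The paper's route uses more structure: $\z$, Green's formula, $(\z,Du)=|Du|$, and $D^ju=0$ for the chain rule. In exchange, its mechanism (the extra boundary term generated by the absorption) does not depend on the size of $g$. It would therefore still give the zero boundary datum in any regime where a nontrivial limit $u$ exists.
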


\begin{proof}
Taking $u_{p}$ as a test function in the weak formulation of problem \eqref{pmain2}, we obtain
\be
\io |Du_{p}|^{p}
+\io \frac{u_{p} |Du_{p}|^{p}}{(1-u_{p})^{\gamma}}
=
\io g u_{p}.
\ee
Passing to the limit as $p\to 1^{+}$ and using the lower semicontinuity of the $BV$ norm, we deduce
\be\label{trace1}
\io |Du|
+\int_{\partial \Omega} |u|\,d\mathcal{H}^{N-1}
+\io \frac{u^* |Du|}{(1-u^*)^{\gamma}}
+\frac12\int_{\partial \Omega} u^{2}\,d\mathcal{H}^{N-1}
\le
\io g u.
\ee

On the other hand, using Green's formula together with
\[
-\Div(\z)+\frac{|Du|}{(1-u^*)^\gamma}=g,
\]
we obtain
\be
\begin{split}
\io g u
&=
-\int_\Omega u^*\,\Div(\z)
+\io \frac{u^*|Du|}{(1-u^*)^\gamma}\\
&=
\langle (\z,Du),1\rangle
-\int_{\partial\Omega} [\z,\nu]u\,d\mathcal H^{N-1}
+\io \frac{u^*|Du|}{(1-u^*)^\gamma}.
\end{split}
\ee
Since $(\z,Du)=|Du|$ and $\|\z\|_\infty\le1$, combining the previous identity with \eqref{trace1} yields
\be
\int_{\partial \Omega} |u|\,d\mathcal{H}^{N-1}
+\frac12\int_{\partial \Omega} u^{2}\,d\mathcal{H}^{N-1}
\le
-\int_{\partial \Omega} [\z,\nu]u\,d\mathcal H^{N-1}.
\ee
Using the estimate
\[
|[\z,\nu]|\le1,
\]
we infer that
\be
\int_{\partial \Omega} |u|\,d\mathcal{H}^{N-1}
+\frac12\int_{\partial \Omega} u^{2}\,d\mathcal{H}^{N-1}
\le
\int_{\partial\Omega}|u|\,d\mathcal H^{N-1}.
\ee
Therefore,
\[
\frac12\int_{\partial \Omega} u^{2}\,d\mathcal{H}^{N-1}\le0,
\]
and consequently
\[
u=0
\qquad \mathcal H^{N-1}\text{-a.e. on }\partial\Omega.
\]
\end{proof}

\section{Proof of Theorems \ref{t2} and \ref{t3}}
\begin{proof}[{\sl Proof of Theorem \ref{t2}}]
We can apply the argument of the theorem \ref{t1} \textit{mutatis mutandis}, except for a few modifications, which we now discuss.

First, equation \eqref{cx1} becomes
\be\label{cx2}
\io|G_{k}(u_{p})|\le S_{N,p}^{\frac{1}{p-1}}\norma{\lambda f}{\lp N}^{\frac{1}{p-1}} |A_{k}|^{1+\frac{1}{N}}.
\ee
thus 
\be\label{cx3}
\norma{u_{p}}{\lp \infty}\le C_{0} S_{N,p}^{\frac{1}{p-1}}\norma{\lambda f}{\lp N}^{\frac{1}{p-1}}
\ee
After using the hypothesis and taking the limit $p\to 1^{+}$, we have
\be
\norma{u}{\lp \infty}< 1
\ee
Therefore, the left of identity \eqref{phip} still goes to zero when $\tau\to 0$ and the condition $\gamma\ge p$ does not have to be used. The rest of the argument remains the same.
\end{proof}

\begin{proof}[{\sl Proof of Theorem \ref{t3}}]
Let $u$ be a solution and let $\z$ be the corresponding vector field as in definition \eqref{def1}. Testing the first equation with $u$, we obtain
\[
\io |Du|
+
\io \frac{u}{1-u}\,|Du|
=
\lambda\io u
\]
Hence
\[
\io \frac{|Du|}{1-u}
=
\lambda\io u
\]

As before, set $v:=-\log(1-u)$. By Poincar\'e inequality:
\[
h(\Omega)\io v(x)\,dx
\le \io |Dv|
=
\lambda\io u
\]

Since $v\ge u$, we have
\[
(h(\Omega)-\lambda)\io u
\le 
0
\]
Therefore, if $\lambda<h(\Omega)$ only the trivial solution $u\equiv 0$ with  $z=-\lambda x$ is possible. Moreover, if $\lambda=h(\Omega)$ then $\io v=\io u$, which is possible only if $u\equiv 0$ again.

Since $h(B_{1})=N$, the proof of the second statement in Theorem~\ref{t3} can be obtained if we set
\be
u(x)=
\bc
1-\left(\frac{\lambda}{N}\right)^{N-1}e^{N-\lambda },\; x\in B_{\frac{N}{\lambda}},\\
1-|x|^{-(N-1)}e^{\lambda (|x|-1)},\; x\in B_{1}\setminus B_{\frac{N}{\lambda}}.
\ec
\ee
and 
\be
\z =
\bc
-\lambda \frac{x}{N},\; x\in B_{\frac{N}{\lambda}}a,\\
-\frac{x}{|x|},\; x\in B_{1}\setminus B_{\frac{N}{\lambda}}.
\ec
\ee
\end{proof}
\begin{remark}
An interesting question that remains open in this work is whether the solution obtained in Theorem~\ref{t1} is always constant. More generally, one may ask under which conditions nonconstant solutions exist.
\end{remark}
\begin{remark}
It is clear that the argument used in the proof of Theorem~\ref{t3} fails when $\gamma\neq 1$. This naturally raises the following question: does one still have rigidity results similar to those in Theorem~\ref{t3} when $\gamma\neq 1$?
\end{remark}

\end{document}